\documentclass{article}
\usepackage{amsmath}
\usepackage{amsthm}
\usepackage{amsfonts}
\usepackage{amssymb}
\usepackage[all]{xy}
\usepackage{amscd}

    \newtheorem{Lem}{Lemma}[section]
    \newtheorem{Lem-Def}{Lemma-Definition}[section]
    \newtheorem{Prop}[Lem]{Proposition}

    \newtheorem{Thm}[Lem]{Theorem}

\theoremstyle{definition}

    \newtheorem{Def}[Lem]{Definition}
    \newtheorem{Exa}[Lem]{Example}
    \newtheorem{Rem}[Lem]{Remark}

\DeclareMathOperator{\Pic}{Pic}

\newcommand{\W}{\mathcal W}

\newcommand{\I}{\mathcal I}

\newcommand{\N}{\mathcal N}
\newcommand{\T}{\mathcal T}

\renewcommand{\L}{\mathcal L}
\renewcommand{\O}{\mathcal O}
\newcommand{\C}{\mathcal C}

\newcommand{\Y}{\mathcal Y}
\newcommand{\col}{\colon}

\newcommand{\ra}{\rightarrow}
\newcommand{\ol}{\overline}

\newcommand{\ot}{\otimes}
\newcommand{\Q}{\mathbb{Q}}
\newcommand{\ze}{\mathbb{Z}}

\newcommand{\ul}{\underline}

\newcommand{\tx}{\text}

\newcommand{\lra}{\longrightarrow}
\renewcommand{\:}{\colon}

\begin{document}

\title{Abel maps for curves of compact type}

\author{
Juliana Coelho and Marco Pacini 
\footnote{
Corresponding author: tel.(55)(21)2629-2078, fax.(55)(21)2629-2113 
\newline E-mail addresses: coelho@impa.br, pacini@impa.br
\newline Second author supported by Faperj, Prog. INST, Proc. E-26/110.774/2008}
\\ Universidade Federal Fluminense
\\ Rua M\'ario Santos Braga S/N 
\\ Niter\'oi--Rio de Janeiro--Brazil
}

\maketitle

\begin{abstract}
\noindent
Recently, the first Abel map for a stable curve of genus 
$g\ge 2$ has been constructed. Fix an integer $d\ge 1$ and
let $C$ be a stable curve of compact type of genus $g\ge 2$. 
We construct two $d$-th Abel maps for $C$, having different targets, and we compare the fibers of the two maps. As an application, we get a characterization of hyperelliptic stable curves of compact type with two components via the $2$-nd Abel map.
\end{abstract}

Keywords: Jacobian of a curve, Abel map, curve of compact type.

Mathematical Subject Classification (2000): 14H10, 14K30.

\section{Introduction}

\subsection{Abel maps of singular curves}

The subject of this paper is the completion of Abel maps for singular curves. If $C$ is a smooth projective curve, the $d$-th Abel map is the geometrically meaningful morphism: 

$$
\begin{array}{rcl}
C^d &\lra& \Pic^d C\\
(p_1,\dots,p_d)&\mapsto& \displaystyle \O_C(\sum_{1\le i\le d} p_i)
\end{array}
$$
where $C^d$ is the product of $d$ copies of $C$ and $\Pic^d C$ is the degree-$d$ Picard variety of $C$. It makes sense, the problem of defining an analogous map when $C$ is singular, preserving a geometrical meaning.
This problem has been considered by several authors in the last three decades. It has been completely solved for integral curves in \cite{AK} and in degree one in \cite{CE} and \cite{CCE}, but a general analysis is still missing. The purpose of this paper is  to consider the special case of Abel maps for curves of compact type, i.e. nodal curves with only separating nodes. This assumption allows us to avoid several combinatorial problems that naturally arise when one considers the general case.

The Abel map of a smooth curve has the remarkable property that its fibers are projectivized complete linear series (up to the action of the $d$-th symmetric group). It turns out that an important motivation for studying Abel maps is the attempt of giving a new definition of limit linear series on a nodal curve. In the case of curves of compact type, it would be interesting to establish the relationship with the definition of limit linear series introduced in \cite{EH} or \cite{O}.

\subsection{The main result}

Fix a nodal curve $C$ over an algebraically closed field. Let $\gamma$ be the number of irreducible components of $C$. 
Let $f\:\C\ra B$ be a smoothing of $C$, i.e. a family of curves such that $\C$ is  smooth and $B$ is smooth, one-dimensional, with a distinguished point $0\in B$ such that $f^{-1}(0)=C$ and $f^{-1}(b)$ is smooth if $b\ne 0$. 

Fix an integer $d\ge 1$ and let $J^d_f$ be the degree-$d$ relative generalized Jacobian of the family. Recall that in general $J^d_f$ can be constructed just as an algebraic space. Let $\C^d$ be the product of  $d$ copies of $\C$ over $B$ and consider the relative rational map: 
$$
\begin{array}{rcl}
\C^d & \stackrel{\alpha^d_f}{\dashrightarrow} & J^d_f\\
(p_1,\dots,p_d)&\mapsto& \displaystyle \O_{f^{-1}(f(p_1))}(\sum_{1\le i\le d} p_i)
\end{array}
$$
which is the relative $d$-th Abel map away from the fiber over the distinguished point $0\in B$. 
If $C$ is of compact type, we will construct a morphism:

\begin{equation}\label{ext}
\ol{\alpha^d_f}\:\C^d\lra J^d_f
\end{equation}
extending $\alpha^d_f$. 
Let $J^d_C$ be the degree-$d$ generalized Jacobian of $C$. The fact that $\ol{\alpha^d_f}$ can be constructed with $J^d_f$ as target, implies the existence of a unique $\gamma$-tuple $\ul{e}_d\in\ze^\gamma$ such that that $\ol{\alpha^d_f}|_{C^d}$ factors through $J^{\ul{e}_d}_C\subset J^d_C$, where  $J^{\ul{e}_d}_C$ is the locus parametrizing line bundles on $C$ whose degrees on the irreducible components are prescribed by $\ul{e}_d$. The main result of this paper is to show that we can construct a canonical extension as in (\ref{ext}) such that the associated $\gamma$-tuple $\ul{e}_d$ has geometrically meaningful properties.

More precisely, for every $\gamma$-tuple $\ul{a}=(a_1,\dots,a_\gamma)\in\Q^\gamma$ such that $a_i\ge 0$, for $i=1,\dots,\gamma$, and $a_1+\dots+a_\gamma=1$, the definition of \emph{$\ul{a}$-semi-stable} and \emph{$\ul{a}$-stable} torsion-free, rank-1 sheaves on $C$ is introduced in \cite{S}. These are sheaves satisfying certain numerical conditions involving the numbers $a_1,\dots, a_\gamma$. 
An intermediate notion of stability is introduced in \cite{E01} by means of $X$-quasistability, for every irreducible component $X$ of $C$. There it is constructed a proper scheme $J^{d,X}_C$ parametrizing $X$-quasistable rank-1 torsion-free sheaves of $C$ of degree $d$.
If $L$ is a $X$-quasistable line bundle of $C$ and if $\ul{d}_L$ is the $\gamma$-tuple whose entries are the degrees of $L$ on the irreducible components of $C$, we say that  $\ul{d}_L$  is a \emph{$X$-quasistable multidegree}.

 We sum up our main result in Theorem \ref{Main}, relating extensions of Abel maps for smoothing of curves of compact type and $X$-quasistable multidegrees.

\begin{Thm}\label{Main}
Fix integers $g\ge 2$ and $d\ge 1$. Let $f\:\C\ra B$ be a smoothing of a stable curve $C$ of compact type of genus $g$. Then there exist a distinguished component $X^{pr}$ of $C$, a $X^{pr}$-quasistable multidegree $\ul{e}_d$ and a morphism: 
$$\ol{\alpha^d_f}\:\C^d \lra J^d_f$$ 
as in (\ref{ext}) such that the following properties hold:
\begin{itemize}
\item[(i)]
$\ol{\alpha^d_f}|_{C^d}$ does not depend on the choice of $f$, and factors through the immersion $J^{\ul{e}_d}_C\hookrightarrow J^d_C$.
\item[(ii)]
if $S^d(C)$ is the $d$-th symmetric product of $C$, then $\ol{\alpha^d_f}|_{C^d}$  factors via a morphism 
$\beta^d_C\:S^d(C)\ra J_C^{\ul{e}_d}$ which does not depend on the choice of  $f$.
\end{itemize}
\end{Thm}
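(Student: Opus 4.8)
The plan is to build the extension $\ol{\alpha^d_f}$ step by step, starting from the case $d=1$ and then bootstrapping to arbitrary $d$ by a "summation" argument. For $d=1$, the rational map $\alpha^1_f\colon\C\dashrightarrow J^1_f$ is already a morphism on the generic fiber; because $C$ is of compact type, the total space $\C$ is regular and the only thing to resolve is the behavior along $C=f^{-1}(0)$. Here I would use the fact, available from the construction of the $1$-st Abel map for stable curves (the work cited in the abstract) together with Esteves' completed Jacobian $J^{1,X}_C$ from \cite{E01}: one twists the "naive" line bundle $\O_{\C}(\sigma)$ along the section by suitable multiples of the components of $C$, and one shows that for exactly one choice of twist the resulting family of sheaves is $X^{pr}$-quasistable for a well-chosen component $X^{pr}$. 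The choice of $X^{pr}$ should be read off from the combinatorics of the dual tree of $C$ — concretely, the component "closest" to the section, or the one making the degree distribution balanced in the sense of Esteves' inequalities. This gives $\ol{\alpha^1_f}$ and a $1$-quasistable multidegree $\ul e_1$.

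For general $d\ge 1$, I would define $\ol{\alpha^d_f}$ by composing the $d$-fold fibered product of $\ol{\alpha^1_f}$ with the "multiplication" (tensor) morphism on the relative Jacobian: informally, $(p_1,\dots,p_d)\mapsto \ol{\alpha^1_f}(p_1)\otimes\cdots\otimes\ol{\alpha^1_f}(p_d)$. Since each factor lands in $J^1_f$, which is a relative group space (after choosing a rigidification, e.g. because we allowed $J$ to be an algebraic space), the tensor product is defined everywhere and the composite is a genuine morphism $\C^d\to J^d_f$ extending $\alpha^d_f$. On the closed fiber this sends $C^d$ into the image of $J^{\ul e_1}_C\times\cdots\times J^{\ul e_1}_C$ under tensor, i.e.\ into $J^{d\cdot\ul e_1}_C$, so one is tempted to set $\ul e_d:=d\cdot\ul e_1$. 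The subtlety is that $d\cdot\ul e_1$ need not itself be $X^{pr}$-quasistable, so an extra twist by a boundary divisor is generally needed to land in $J^{d,X^{pr}}_C$; this twist is forced and canonical because Esteves' stability picks out a unique multidegree in each "chamber", and one checks the twisted family is still a morphism by the same regularity-of-$\C$ argument. This yields $\ul e_d$ as an $X^{pr}$-quasistable multidegree.

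Part (i): independence of the smoothing $f$ on the closed fiber. The restriction $\ol{\alpha^d_f}|_{C^d}$ is determined by its values on an open dense subset — say tuples of smooth points lying on prescribed components — together with continuity. On such a tuple $(p_1,\dots,p_d)$ with $p_i$ on component $X_{j(i)}$, the image is an explicit twist of $\O_C(\sum p_i)$ by a $\ze$-linear combination of the $X_j$ that depends only on the combinatorial type of the tuple (which components the $p_i$ sit on) and on the dual tree of $C$ — not on the geometry of $f$. Hence it agrees for any two smoothings, and since $\C^d$ is reduced and $J^d_f$ is separated, equality on a dense open forces equality everywhere on $C^d$. The factorization through $J^{\ul e_d}_C\hookrightarrow J^d_C$ is then automatic: the multidegree is locally constant on the connected $C^d$, and it equals $\ul e_d$ by construction.

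Part (ii): factoring through the symmetric product. The map $C^d\to J^{\ul e_d}_C$ sends $(p_1,\dots,p_d)$ to a sheaf built out of $\O_C(\sum p_i)$, which is manifestly invariant under permuting the $p_i$, so set-theoretically the map is $S_d$-invariant. To promote this to a morphism $\beta^d_C\colon S^d(C)\to J^{\ul e_d}_C$ I would invoke that $S^d(C)$ is the categorical (indeed geometric) quotient of $C^d$ by $S_d$ — $C^d$ is projective and $S_d$ finite, so the quotient exists and is projective — together with the universal property of the quotient for $S_d$-invariant morphisms to a separated scheme. Independence of $f$ for $\beta^d_C$ is inherited directly from part (i). The main obstacle throughout is the first paragraph: producing the correct component $X^{pr}$ and the correct twist so that the extended $d=1$ map actually lands in Esteves' quasistable locus (not merely in the Jacobian as an algebraic space), and verifying $X^{pr}$-quasistability of the resulting multidegree — this is where the compact-type hypothesis is essential, since it makes the dual graph a tree and turns the relevant stability inequalities into a tractable system indexed by edges of that tree.
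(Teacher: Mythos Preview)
Your overall architecture matches the paper's: start from the known $d=1$ map, build higher $d$ by tensoring copies of $\ol{\alpha^1_f}$ and then twisting by a vertical divisor supported on components of $C$, and finally observe symmetry. The paper does exactly this, though inductively (one factor at a time) rather than all at once; in the end it also rewrites $\ol{\alpha^d_f}|_{C^d}$ as the $d$-fold product of $\ol{\alpha^1_f}|_C$ followed by a single twist, so the two viewpoints coincide.

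There is, however, a genuine gap at the heart of your proposal. You write that ``an extra twist by a boundary divisor is generally needed \dots this twist is forced and canonical because Esteves' stability picks out a unique multidegree in each chamber.'' This is precisely the content you must supply, not assume. The paper does it by introducing, for a given multidegree $\ul d$, the set $\T_{\ul d}(X^{pr})$ of \emph{$\ul d$-big tails} of $C$ (tails $Z\not\supseteq X^{pr}$ satisfying an explicit inequality in $d_Z$, $g_Z$, $g$) and twisting by $\O_{\C}(-\sum_{Z\in\T_{\ul d}(X^{pr})}Z)$. The key technical lemma (Lemma~3.2) then shows, by a four-step chase through the semistability inequalities on tails and their complements, that if $M$ is $X^{pr}$-quasistable of multidegree $\ul d$ and $M'$ has degree $1$ on $X^{pr}$ and $0$ elsewhere, then $M\otimes M'\otimes\O_C(-\sum_{Z\in\T_{\ul d}(X^{pr})}Z)$ is again $X^{pr}$-quasistable. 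This lemma is what makes the induction run and produces the specific $\ul e_d$; without it you have neither identified the twist nor shown that an $X^{pr}$-quasistable multidegree of total degree $d$ exists at all.

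A second, smaller point: your description of $X^{pr}$ as ``the component closest to the section'' is not right --- there is no fixed section in the setup. In the paper $X^{pr}$ is determined purely combinatorially (Lemma~2.1): it is the unique \emph{central} component (every connected piece of its complement has genus $<g/2$), or one of two \emph{semicentral} components when $[C]\in\Delta_{g/2}$. This combinatorial characterization is what feeds into the stability inequalities and is essential for the proof of Lemma~3.2. Your density argument for independence of $f$ in (i) would work, but the paper's inductive argument is cleaner: once $\ol{\alpha^1_f}|_C$ is known to be independent of $f$ and the twist is pulled back from $C$ (hence independent of $f$ because $C$ is of compact type), independence of $\ol{\alpha^d_f}|_{C^d}$ is immediate by induction.
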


We point out that the determination of the component $X^{pr}$ and the $X^{pr}$-quasistable multidegree $\ul{e}_d$ is effective, as it is rather clear from the proof of Lemma \ref{centracomp}, and from the iterative procedure yielding Definition \ref{Main-Def}.  

The property (i) of Theorem \ref{Main} allows us to consider another $d$-th Abel map for $C$ whose target is the compactification of the universal Picard variety constructed in \cite{C}. In Proposition \ref{same-fiber}, we will see that the set-theoretic fibers of the two Abel maps are equal. Recall that the same phenomenon takes place for the first Abel maps of a stable curve (see \cite{CE} and \cite{CCE}).
However, in Remark \ref{higher-maps}, we will produce an example, hinting 
that it should not hold  for higher Abel maps of curves not of compact type. 

\smallskip

As an application of Theorem \ref{Main} (ii), we give the following characterization of hyperelliptic curves of compact type with two components.

\begin{Prop}\label{Application}
Fix an integer $g\ge 2$, and let $C$ be a stable curve of compact type of genus $g$ with two components. Then $C$ is hyperelliptic if and only if there exists a fiber of the morphism $\beta^2_C$ consisting of two smooth rational curves intersecting at one point.
\end{Prop}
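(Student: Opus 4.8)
The plan is to determine the fibres of $\beta^2_C$ by decomposing $S^2(C)$ into its natural strata and using the explicit form of $\beta^2_C$ coming from the proof of Theorem~\ref{Main}. Write $C = C_1\cup C_2$ with $C_1\cap C_2 = \{n\}$, and $\ul{e}_2 = (e_1,e_2)$; stability forces $g_i := g(C_i)\ge 1$. Since $C$ is of compact type its dual graph is a tree, so $J^{\ul{e}_2}_C \cong \Pic^{e_1}C_1\tm\Pic^{e_2}C_2$, whose points we write $L = (L_1,L_2)$. Also $S^2(C)$ is the union of $S^2(C_1)$, $C_1\tm C_2$ and $S^2(C_2)$, and inside $S^2(C)$ the intersection $S^2(C_1)\cap S^2(C_2)$ is the single point $2n$ (the only point of $S^2(C)$ supported entirely at $n$). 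From the construction of $\beta^2_C$ I would extract the following: on $S^2(C_i)$ the morphism $\beta^2_C$ is an Abel map of the \emph{smooth} curve $C_i$, twisted by a fixed line bundle supported on the other component so as to land in multidegree $\ul{e}_2$. Consequently, for fixed $L$ the locus $(\beta^2_C)^{-1}(L)\cap S^2(C_1)$ is either empty (if $L_2$ is not the prescribed twist) or a complete linear series $|M_1|$ of degree $2$ on $C_1$, symmetrically for $S^2(C_2)$, and $(\beta^2_C)^{-1}(L)\cap(C_1\tm C_2)$ is at most a point (cut out by one degree-$1$ linear series on each $C_i$). In particular each $(\beta^2_C)^{-1}(L)\cap S^2(C_i)$ is irreducible of dimension $\le 1$, being a projective space of degree-$2$ divisors on a curve of genus $\ge 1$.

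For the ``only if'' implication, suppose $F = (\beta^2_C)^{-1}(L)$ is a union of two smooth rational curves meeting at one point. Since $F\cap(C_1\tm C_2)$ is at most a point and each $F\cap S^2(C_i)$ is irreducible, the two rational components of $F$ must be $F\cap S^2(C_1)\cong\Ps^1$ and $F\cap S^2(C_2)\cong\Ps^1$; in particular $h^0(C_i,M_i) = 2$ for $i = 1,2$. The two components meet in a single point, which lies in $S^2(C_1)\cap S^2(C_2) = \{2n\}$, and since $F$ is connected they do meet there, so $2n\in|M_i|$; hence $M_i\cong\O_{C_i}(2n_i)$ and $h^0(C_i,\O_{C_i}(2n_i)) = 2$ for both $i$. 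Finally I would invoke the standard fact (via limit linear series, cf.~\cite{EH}, or admissible double covers) that a stable curve of compact type $C_1\cup_n C_2$ is hyperelliptic precisely when each $C_i$ carries a $g^1_2$ ramified at $n_i$, equivalently when $h^0(C_i,\O_{C_i}(2n_i)) = 2$ for both $i$; this yields that $C$ is hyperelliptic.

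For the converse, assume $C$ is hyperelliptic, so $h^0(C_i,\O_{C_i}(2n_i)) = 2$ for $i = 1,2$. Using the twists defining $\beta^2_C$ I would single out the line bundle $L\in J^{\ul{e}_2}_C$ with $L_i = \O_{C_i}(e_i n_i)$ and check that over it the three strata of $S^2(C)$ contribute, respectively, the complete series $|\O_{C_1}(2n_1)|\cong\Ps^1$, the single point $2n$ (since $|\O_{C_i}(n_i)| = \{n_i\}$), and $|\O_{C_2}(2n_2)|\cong\Ps^1$. As $2n_i\in|\O_{C_i}(2n_i)|$ and $S^2(C_1)\cap S^2(C_2) = \{2n\}$, the two $\Ps^1$'s meet exactly at the point $2n$, and the $(C_1\tm C_2)$-contribution is already this point; hence $(\beta^2_C)^{-1}(L)$ consists of two smooth rational curves meeting at one point, as required.

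The step I expect to be the main obstacle is the bookkeeping behind the first paragraph: reading off from the proof of Theorem~\ref{Main} the exact twisting line bundles that $\beta^2_C$ applies on each of the three strata of $S^2(C)$, and checking the gluing of $S^2(C)$ along divisors meeting the node $n$ (so that indeed $S^2(C_1)\cap S^2(C_2) = \{2n\}$ and the strata cover $S^2(C)$ correctly). Once the stratum-wise linear-series description is in place, both implications follow from the dimension and irreducibility count above. A secondary point to pin down carefully is the equivalence of ``$C$ hyperelliptic'' with the two conditions $h^0(C_i,\O_{C_i}(2n_i)) = 2$, in particular in the low-genus cases $g_1 = 1$ or $g_2 = 1$, where hyperellipticity of the component is automatic.
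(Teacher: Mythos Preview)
Your approach matches the paper's: both stratify $S^2(C)$ into $S^2(C_1)$, $C_1\times C_2$, and $S^2(C_2)$, identify $\beta^2_C$ on each stratum with a (twisted) Abel map of the smooth components, and reduce the question to whether $|2n_i|$ is a $g^1_2$ on each $C_i$. The explicit stratum-wise formulas you flag as the ``main obstacle'' are exactly what the paper writes out case by case (for $\ul e_2=(1,1)$, declaring $\ul e_2=(2,0)$ analogous); note also that your ``if'' and ``only if'' labels are swapped relative to the statement.
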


We plan to investigate whether a similar characterization could be given for hyperelliptic curves with many components or for trigonal curves.

\subsection{Notation and Terminology}

A \emph{curve} is a connected, projective and reduced scheme of dimension 1 over an algebraically closed field $k$. If $C$ is a curve, then $g_C:=1-\chi(\O_C)$ is the \emph{genus} of $C$ and $\omega_C$ is its dualizing sheaf. We will always consider curves with  nodal singularities.

A \emph{subcurve} of $C$ is a union of irreducible components of $C$. If $Y$ is a proper subcurve of $C$, we let $Y':=\overline{C-Y}$ and call it the \emph{complementary subcurve} of $Y$. We denote $k_Y:=\#(Y\cap Y')$.  
A subcurve $Y$ of $C$ is said to be a \emph{tail} of $C$ if $k_Y=1$. 
In this case, the intersection $Y\cap Y'$ consists of a single node $n$ and we say that $n$ is a \emph{separating node} of $C$. We remark that a separating node defines two tails $Y$ and $Y'$ on $C$ such that $Y\cap Y'=\{n\}$.

A \emph{stable} curve $C$ is a nodal curve such that every smooth rational subcurve of $C$ meets the rest of the curve in at least $3$ points. 
A curve $C$ is said to be \emph{of compact type} if its only singularities are separating nodes. 
A curve $W$ is obtained by \emph{blowing up} a curve $C$ at a subset $\Sigma$ of its nodes, if there is a morphism $\pi\col W\ra C$ such that, for every $p\in\Sigma,$ we have $\pi^{-1}(p)\simeq\mathbb{P}^1$ and $\pi\col W-\cup_{p\in \Sigma} \pi^{-1}(p)\ra C-\Sigma$ is an isomorphism.

A \emph{family of curves} is a proper and flat morphism $f\col\mathcal C\ra B$ whose fibers are curves. If $b\in B$, we denote $C_b:=f^{-1}(b)$. 
A \emph{smoothing} of a curve $C$ is a family $f\col\C\ra B,$ where $\C$ is smooth and $B$ is a smooth curve with a distinguished point $0\in B$ such that $C_b$ is smooth for $b\ne 0$ and $C_0=C$.

If $f\:\C\ra B$ is a family of curves $C$, we denote by $\C^d$ the product of $d$ copies of $\C$ over $B$ and by $S^d(\C)$ the $d$-th symmetric product of $C$ over $B$, i.e. the quotient of  $\C^d$ by the action of the $d$-th symmetric group.

 The \emph{degree} of a line bundle $L$ on a curve $C$ is $\deg(L):=\chi(L)-\chi(\O_C)$.

\section{Technical background}

\subsection{Jacobians of singular curves}

If not otherwise specified, in this section $C$ will be a nodal curve with irreducible components $X_1,\ldots,X_{\gamma}$. Let $J_C^d$ be the degree-$d$ generalized Jacobian of $C$, parametrizing line bundles of degree $d$ on $C$. Since $C$ is a proper scheme, $J_C^d$ is a scheme (see \cite[Theorem 8.2.3]{BLR}). We have the following decomposition of $J^d_C$:
\begin{equation}\label{Jacobdecomp}
J^d_C=\underset{ d_1+\ldots+d_{\gamma}=d}{\underset{\ul{d}=(d_1,\dots,d_\gamma)}{\cup}}J^{\ul{d}}_C,
\end{equation}
where $J^{\ul{d}}_C$ is a connected component of $J_C^d$ parametrizing line bundles $L$ such that $\deg(L|_{X_i})=d_i$  for $i=1,\dots,\gamma$. 
If $C$ is of compact type, then 
for each $\ul{d}=(d_1,\ldots,d_{\gamma})$, we have an isomorphism:
\begin{equation}\label{iso-compo}
\begin{array}{rcl}
J^{\ul{d}}_C &\stackrel{\sim}{\lra}& J_{X_1}^{d_1}\times\cdots\times J_{X_{\gamma}}^{d_{\gamma}}\\
L&\mapsto& \displaystyle (L|_{X_1},\dots, L|_{X_\gamma})
\end{array}.
\end{equation}

Let  $\ul{d}=(d_1,\ldots,d_{\gamma})\in\ze^d$.
If $L\in J^{\ul{d}}_C$, we say that $L$ has \emph{multidegree} $\ul{d}$ and that $d_1+\ldots+d_{\gamma}$ is the \emph{total degree} of $L$.
 For each subcurve $Y$ of $C$, set: 
$$d_Y:=\sum_{X_i\subseteq Y}d_i.$$

Fix an integer $d$. A \emph{polarization} on $C$ is a vector bundle $E$ on $C$
of rank $r>0$ and relative degree $r(g_C-1-d)$. We will denote by
$E_d$ the (canonical) polarization on $C$:
\begin{equation}\label{polar}
E_d=
\begin{cases}
\begin{array}{ll}
\omega_C^{\otimes (g_C-1-d)}\oplus \mathcal O_C^{\oplus (2g_C-3)} & d\ne g_C-1 \\
\mathcal O_C & d=g_C-1
\end{array}.
\end{cases}
\end{equation}

Let $L$ be a line bundle on $C$ with multidegree $\ul{d}$. 
We say that $L$, or $\ul{d}$, is (canonically) \emph{semistable} if for every non-empty, proper subcurve $Y\subsetneq C$ we have: 
$$
\chi(L|_Y) \ge \frac{-\deg E_d|_Y}{\text{rank}(E_d)}.
$$
Moreover, if $X$ is any component of $C$, we say that $L$, or $\ul{d}$,  is (canonically) \emph{$X$-quasistable} if it is semistable and: 
$$
\chi(L|_Y)>\frac{-\deg E_d|_Y}{\text{rank}(E_d)},
$$
whenever $Y\supseteq X$.  It is not difficult to prove  that a degree-$d$ line bundle $L$ on $C$ is semistable with respect to $E_d$ if and only if for every non-empty, proper subcurve 
$Y\subsetneq C$:
\begin{equation}\label{BI1}
\left|\deg L|_Y -\frac{d}{2g-2} \deg \omega_C|_Y \right|\le \frac{k_Y}{2}.
\end{equation}
 Moreover, $L$ is $X$-quasistable with respect to $E_d$ if and only if (\ref{BI1}) is satisfied and: 
\begin{equation}\label{BI2}
\deg L|_Y -\frac{d}{2g-2} \deg \omega_C|_Y > - \frac{k_Y}{2},
\end{equation}
whenever $Y\supseteq X$ 
(see \cite[Lemma 3.1]{P}). If the condition (\ref{BI1}) (respectively  (\ref{BI2})) is satisfied for a degree-$d$ line bundle $L$ on $C$ of multidegree $\ul{d}$ and a subcurve $Y$ of $C$ (respectively a subcurve $Y$ of $C$ such that $X\subseteq Y$), we say that $L$, or $\ul{d}$, is \emph{semistable} (respectively \emph{$X$-quasistable}) at $Y$. It is easy to see that $\ul{d}$ is semistable at $Y$ if and only if it is semistable at $Y'$.
We define subschemes of $J_C^d$ by: 
$$J^{d,ss}_C=\underset{d_1+\ldots+d_{\gamma}=d}{\underset{\ul{d}\tx{ is semistable}}{\cup}}J^{\ul{d}}_C
 \quad\tx{and} \quad
J^{d,X}_C=\underset{d_1+\ldots+d_{\gamma}=d}{\underset{\ul{d}\tx{ is $X$-quasistable}}{\cup}}J^{\ul{d}}_C.$$

Now, let $f\:\C\to B$ be a family of nodal curves $C$. We denote by  
$J^d_f$ the relative degree-$d$ generalized Jacobian of the family $f$.
In general, $J^d_f$ can be constructed just as an algebraic space. 
Nevertheless, there exists an \'etale base change $B'\ra B$ such that, if we consider the pull-back family:
$$
\Y:=\C\times_B B'\stackrel{f'}{\lra} B' ,
$$ 
then $J^d_{f'}$ is a $B'$-scheme and there exists a universal line bundle over $J^d_{f'}\times_{B'}\Y$.

\subsection{The first Abel map}

In \cite{CE} and \cite{CCE}, the authors constructed the first Abel map for a smoothing of a stable curve. More precisely, fix a smoothing $f\:\C\ra B$ of a stable curve $C$. For our purposes, we may assume that $C$ is of compact type. Let $J_f^1$ be the degree-1 relative generalized Jacobian of $f$. Then there exists a morphism:
\begin{equation}\label{1stabelmapED}
\ol{\alpha^1_f}\: \C\lra  J^1_f.
\end{equation}
extending the relative first Abel map of the family of smooth curves $f|_{f^{-1}(B-0)}$. We will  recall the definition of $\ol{\alpha^1_f}|_C$ in (\ref{first-def}).

For every $g\ge 2$, let $\ol{M_g}$ be the moduli space of Deligne--Mumford stable curves of genus $g$. If $C$ is a stable curve, $[C]$ will denote the point of  $\ol{M_g}$ parametrizing $C$. If $g$ is even, let $\Delta_{g/2}$ be the divisor of $\ol{M_g}$ which is the closure of the locus parametrizing curves $C=X_1\cup X_2$ such that  $g_{X_1}=g_{X_2}=g/2$ and $\#(X_1\cap X_2)=1$.

\begin{Def}\label{map-def}
An irreducible component $X$ of $C$ is \emph{central} 
(respectively \emph{semicentral}) if $g_Z<g_C/2$ (respectively $g_Z\le g_C/2$) for every connected component $Z$ of $X'$.
\end{Def}

\begin{Lem}\label{centracomp}
Fix an integer $g\ge 2$. Let $C$ be a stable curve of compact type of genus $g$. Let $M_C$ (respectively $N_C$) be the number of central (respectively semicentral) components of $C$. Then the following properties hold:
\begin{itemize}
\item[(i)]
$M_C=1$ if and only if $[C]\notin\Delta_{g/2}$.
\item[(ii)]
$M_C=0$ if and only if $[C]\in\Delta_{g/2}$.
\item[(iii)]
if $M_C=0$, then $N_C=2$ and the intersection of the two semicentral components is non-empty. 
\end{itemize}
\end{Lem}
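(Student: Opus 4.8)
The plan is to set up the combinatorics of a curve of compact type using its dual tree and then track the function $g_Z$ as one moves between adjacent components. Recall that for a curve of compact type, the dual graph is a tree $T$ whose vertices $v_1,\dots,v_\gamma$ correspond to the components $X_1,\dots,X_\gamma$ and whose edges correspond to the separating nodes; deleting the vertex $v_i$ from $T$ disconnects it into $k_{X_i}$ subtrees, which correspond exactly to the connected components $Z$ of $X_i'$. For each such $Z$ one has the crucial additivity $g_C = g_{X_i} + \sum_{Z} g_Z + (\text{number of edges at }v_i) - k_{X_i}$; more usefully, since $C$ is of compact type, $g_C = \sum_j g_{X_j}$, and for a tail $Y$ of $C$ one has $g_Y + g_{Y'} = g_C$. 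The key monotonicity statement I would isolate as a sub-claim: if $X$ and $X^*$ are adjacent components, meeting at a node $n$, and $Z^*$ is the connected component of $(X^*)'$ on the ``$X$ side'' of $n$ (so $Z^* \supseteq X$) while $Z$ is the component of $X'$ on the ``$X^*$ side'' (so $Z \supseteq X^*$), then $Z$ and $Z^*$ are complementary tails, hence $g_Z + g_{Z^*} = g_C$. In particular exactly one of $g_Z, g_{Z^*}$ can be $> g_C/2$, they can both equal $g_C/2$ only simultaneously, and otherwise one is $< g_C/2$ and the other is $> g_C/2$ — unless one of them is $\ge g_C/2$ and we need to look further along the tree.

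For part (i) and (ii), I would argue as follows. Orient each edge of $T$: for the edge joining adjacent $X, X^*$ at node $n$, with $Z\supseteq X^*$ the component of $X'$ across $n$ and $Z^*\supseteq X$ the component of $(X^*)'$ across $n$, point the edge from $X$ toward $X^*$ when $g_Z > g_{Z^*}$ (equivalently $g_Z > g_C/2$), and point it from $X^*$ toward $X$ when $g_{Z^*} > g_C/2$; leave it unoriented when $g_Z = g_{Z^*} = g_C/2$. First observe that an unoriented edge can occur at most once: if the edge at $n$ is unoriented then $Z$ and $Z^*$ are tails of genus exactly $g_C/2$ partitioning $C$, and any other separating node lies inside $Z$ or inside $Z^*$, forcing the corresponding tails to have genera summing to $g_C$ with one summand $< g_C/2$, hence oriented. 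A component $X$ is central precisely when every edge at $v_X$ points \emph{into} $v_X$ (every connected component $Z$ of $X'$ has $g_Z < g_C/2$), and semicentral when no edge at $v_X$ points \emph{out of} $v_X$. Now, on a finite tree, an orientation of all edges has at least one sink (a vertex with all incident edges pointing in): follow edges in the direction they point; since $T$ is a tree there are no cycles, so the walk terminates at a sink. This already gives $M_C \ge 1$ in the fully-oriented case $[C]\notin\Delta_{g/2}$. For uniqueness of the sink when all edges are oriented: if $v$ and $w$ were two distinct sinks, the unique path in $T$ from $v$ to $w$ has its first edge pointing into $v$ and its last edge pointing into $w$, so somewhere along the path the orientation reverses, producing a vertex $u$ with two incident path-edges both pointing \emph{out} of $u$; but then by the complementarity $g_Z + g_{Z^*} = g_C$ applied to these two edges, the two corresponding components of $u'$ both have genus $> g_C/2$, contradicting $\sum g_{X_j}=g_C$ (indeed these two genera already exceed $g_C$). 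Hence $M_C = 1$. Conversely, if there is an unoriented edge (which happens exactly when $[C]\in\Delta_{g/2}$, by definition of $\Delta_{g/2}$ together with the genus-additivity forcing the two tails of that node to have genus $g/2$ each — and note $g$ must be even for this to occur), then for \emph{every} vertex $v$ the unique edge-path from $v$ to the unoriented edge shows some edge at $v$ points out of $v$ (the edge of that path incident to $v$, oriented away from $v$ toward the unoriented edge — here one checks that along the path all edges point consistently ``toward'' the unoriented edge, again by the complementarity sub-claim), so $g_Z \ge g_C/2$ for some component $Z$ of $v'$, giving no central component: $M_C = 0$. This proves (i) and (ii).

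For part (iii): assume $M_C = 0$, so by (ii) there is a (unique) unoriented edge $e$ at a node $n$, with complementary tails $Y, Y'$ of genus $g_C/2$ each. The two components $X, X^*$ meeting at $n$ satisfy: $X \subseteq Y$ (say) with the component of $X'$ across $n$ equal to $Y'$ of genus exactly $g_C/2$, and every other component $Z$ of $X'$ lies inside $Y$ hence has $g_Z \le g_Y - g_X \le g_C/2$, with equality only if $X$ has genus $0$ and $X'\cap Y$ is connected — but in any case $g_Z \le g_C/2$, so $X$ is semicentral; symmetrically $X^*$ is semicentral; and $X\cap X^* = \{n\}\ne\emptyset$. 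It remains to show these are the \emph{only} semicentral components. If $W\ne X, X^*$ is any other component, then $W$ lies strictly inside $Y$ or strictly inside $Y'$, say $W\subsetneq Y$; the path in $T$ from $v_W$ toward $e$ leaves $v_W$ along some edge, and the component $Z$ of $W'$ on the far side of that edge contains $X^*$ and all of $Y'$, so $g_Z \ge g_{Y'} = g_C/2$, and in fact $g_Z > g_C/2$ unless the remaining part $Y\setminus(Z\cup W)$ has genus $0$ and $g_W = 0$ — but even when $g_Z = g_C/2$ we would need $W$ rational meeting $Z$-side and the rest... here I must rule out $g_Z = g_C/2$: if $g_Z = g_C/2 = g_{Y'}$ then $Z = Y'$, forcing $W$ adjacent to $n$, i.e. $W = X$, contradiction. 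Hence $g_Z > g_C/2$, so $W$ is not semicentral, giving $N_C = 2$.

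The main obstacle is the bookkeeping in the sub-claim that along any path in the dual tree the induced edge-orientations are ``consistent'' — precisely, that if an edge points from $a$ to its neighbor $b$ (meaning the big side of $a'$ through that edge has genus $> g_C/2$), then continuing along the path away from $a$, every subsequent edge also points forward, because each time we move to a larger tail on that side and genus can only stay above $g_C/2$ or, at worst, the complementary small side shrinks. Making this monotonicity airtight — handling the boundary case of genus exactly $g_C/2$ and the possibility of genus-$0$ components padding the tree — is where the real care is needed; everything else reduces to the elementary fact that a tree with an orientation on all edges has a unique sink once ``double-source'' vertices are excluded by the genus-sum constraint.
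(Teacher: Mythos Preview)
Your approach via the dual tree and edge orientations is a clean repackaging of the paper's argument: the paper's map $X \mapsto Y(X)$ is precisely ``follow the outgoing edge at $X$'', and its iteration $X_{i+1}=Y(X_i)$ to locate a central component is your walk to a sink. The uniqueness argument differs in presentation --- the paper shows directly that two central components $X_1,X_2$ force $g_{Z_1}+g_{Z_2}<g$ for tails covering $C$, while you detect a double-source vertex along the path between two sinks --- but both reduce to the same genus-sum contradiction. So for (i) and (ii) your argument is essentially correct and essentially equivalent to the paper's.

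There is, however, a genuine gap in your handling of the boundary case $g_Z=g_C/2$, and it bites in two places. First, your claim that an unoriented edge occurs at most once: if $n'$ is another separating node lying inside $Z$, the tail at $n'$ contained in $Z$ has genus $\le g_C/2$, but equality is \emph{not} excluded by genus additivity alone --- it occurs exactly when the complementary piece inside $Z$ has genus $0$. Second, and more seriously, in part (iii) your assertion ``if $g_Z=g_C/2=g_{Y'}$ then $Z=Y'$'' is false: one has $Z\supsetneq Y'$ with $g_Z=g_{Y'}$ whenever $Z\cap Y$ is a nonempty union of genus-$0$ components. What actually rules out both scenarios is \emph{stability}: in either case one produces a nonempty connected proper subcurve $R$ (namely $Z\cap Y$) with $g_R=0$ and $k_R=2$, and a stable curve admits no such subcurve since every smooth rational component must meet the rest in at least $3$ points. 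The paper invokes exactly this in its fifth step. Without it, a chain $X_1\,\text{--}\,X_2\,\text{--}\,X_3$ with $g_{X_1}=g_{X_3}=g/2$ and $X_2\cong\mathbb{P}^1$ would have two unoriented edges and three semicentral components; stability is what forbids this configuration. Once you insert the stability argument at these two points, your proof is complete.
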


\begin{proof}

First of all, notice that $[C]\in\Delta_{g/2}$ if and only if there exists a node which is the intersection of two tails of $C$ of genus $g/2$.

\smallskip

\emph{First step}. We claim that $M_C\le 1$. Indeed, suppose that $X_1$ and $X_2$ are central components. Let $Z_1$ and $Z_2$ be the connected components of $X'_1$ and $X'_2$ containing respectively $X_2$ and $X_1$.  Then $g_{Z_i}<g/2$ for $i=1,2$, because $X_1$ and $X_2$ are central. Set $Y_1:=Z_1\cap Z_2$ and $Y_2:=\ol{X'_1-Z_1}$.  Since $C$ is of compact type, we have  $Z_2=Y_1\cup Y_2\cup X_1$. In particular, if $p$ is a point of $C$ with $p\notin Z_1$, then $p\in X_1\cup Y_2\subseteq Z_2$. Hence $C=Z_1\cup Z_2$. Since the genus of a curve of compact type is the sum of the genus of its irreducible components, we have $g\le g_{Z_1}+g_{Z_2}<g$ yielding a contradiction.

\smallskip

\emph{Second step}. For any irreducible component $X$ of $C$, we define an irreducible component $Y(X)$ of $C$ and a connected component $Z(X)$ of $X'$ as follows. If $X$ is central, set $Y(X):=X$ and we let $Z(X)$ be any connected component of $X'$. If $X$ is not central, we claim that there exists exactly one connected component $Z(X)$ of $X'$ such that $g_{Z(X)}\ge g/2$. Indeed, this is clear if $g_X>0$. If $g_X=0$, then $X'$ has at least 3 connected components of genus at least 1, because $C$ is stable,  hence there exists a unique connected component $Z(X)$ of $X'$ with $g_{Z(X)}\ge g/2$. 
Denote by $Y(X)$ the irreducible component of $Z(X)$ intersecting $X$. 
Notice that $Y(X)=X$ if and only if $X$ is central.

\smallskip

\emph{Third step}. We claim that if $X$ and $Y(X)$ are not central, and if $[C]\notin\Delta_{g/2}$, then:
\begin{equation}\label{WYX}
Z(Y(X))\subseteq Z(X)-Y(X).
\end{equation}

Indeed, assume that $X$ is not central. Now, $Z(X)'$ is a connected component of $Y(X)'$.  Since $g_{Z(X)}\ge g/2$ and $[C]\notin \Delta_{g/2}$, we have 
$g_{Z(X)'}<g/2$. Recall that, since $Y(X)$ is not central, $Z(Y(X))$ is the connected component of  $Y(X)'$ such that $g_{Z(Y(X))}\ge g/2$. Then $Z(Y(X))\ne Z(X)'$, and $Z(Y(X))\subseteq Z(X)-Y(X)$.

\smallskip

\emph{Fourth step}. We show (i) and (ii). First of all, we prove that if $[C]\notin\Delta_{g/2}$, then $M_C\ge 1$.
 Let $X_1$ be any irreducible component of $C$. Set $X_2=Y(X_1)$ and by induction $X_i=Y(X_{i-1})$ for every $i>2$. If $X_r$ is central, for some positive integer $r$, then we are done. Otherwise, $X_r\ne X_{r+1}$ for every $r\ge 1$ and by 
(\ref{WYX}), we get an infinite chain of subcurves:
$$\dots\subsetneq Z(X_r)\subsetneq \dots \subsetneq  Z(X_2)\subsetneq Z(X_1)$$  
yielding a contradiction. 
Then $M_C\ge 1$ and hence $M_C=1$ by the first step.

Conversely, we prove that if $M_C=1$, then $[C]\notin\Delta_{g/2}$. Let $X$ be the central component. Then every node of $C$ is the intersection of two tails 
$W_1$ and $W_2$ such that $W_1\subseteq X'$ and $X\subseteq W_2$. Thus 
$g_{W_1}<g/2$, and $[C]\notin\Delta_{g/2}$.

Of course, the first step and (i) imply (ii). 

\smallskip

\emph{Fifth step}. We show (iii). Let $M_C=0$, i.e. $[C]\in \Delta_{g/2}$. First of all, we show that $N_C\ge 2$. Since $[C]\in \Delta_{g/2}$, there exist tails $Z_1$ and $Z_2$ of genus $g/2$ intersecting in a node. Assume that $X_1$ and $X_2$ are the irreducible components such that $X_1\subseteq Z_1$, $X_2\subset Z_2$, $X_1\cap X_2\ne\emptyset$. Then $X_1$ and $X_2$ are semicentral components 
intersecting in a node.

We show that $N_C\le 2$. Indeed, suppose that $X_1$, $X_2$, $X_3$ are semicentral components. Notice that $X_1$, $X_2$, $X_3$ are not central, because $M_C=0$. Then 
there exist at least 4
distinct tails $Z_1,\dots, Z_4$ of genus $g/2$. Since $C$ is of compact type, up to change the index of the tails, we may assume that $Z_1\subseteq Z_3$ and $Z_4\subseteq Z_2$. Thus $(Z_1\cup Z_4)'\ne\emptyset$, because the tails $Z_1,\dots,Z_4$ are distinct. Hence $g_{(Z_1\cup Z_4)'}=g-g_{Z_1}-g_{Z_2}=0$ 
and $k_{(Z_1\cup Z_4)'}=2$. This is a contradiction because, since $C$ is stable, $k_Y\ge 3$ for every non-empty subcurve $Y\subsetneq C$ with $g_Y=0$.
\end{proof}

\begin{Def} 
Fix an integer $g\ge 2$. Let $C$ be a stable curve of compact type of genus $g$. 
If $[C]\notin\Delta_{\frac{g}{2}}$, let $X^{pr}$ be the central component of $C$. If $[C]\in\Delta_{\frac{g}{2}}$, let $X^{pr}$ be one of the two semicentral components of $C$.
We will keep this choice throughout the paper. 
We call $X^{pr}$ \emph{the principal component of} $C$.
A tail $Z$ of $C$ is \emph{small} if at least one of the following conditions is satisfied:
\begin{itemize}
\item[(1)] 
$g_Z<g_C/2$ 
\item[(2)]
$g_Z=g_C/2$ and $Z'\supseteq X^{pr}$.
\end{itemize} 
\end{Def}

Fix an integer $g\ge 2$. Let $C$ be a stable curve of compact type of genus $g$. 
We recall now some properties of the first Abel map (\ref{1stabelmapED}).
We define for each $q\in C$ a line bundle $\N_q$ on $C$ as follows. If $q$ is a smooth point of $C$, then $\N_q:=\mathcal O_C(q)$. 
If $q$ is a node of $C$, let $Z$ be the small tail attached to $q$. Using the isomorphism (\ref{iso-compo}), there exists a unique line bundle $\N_q$ on $C$ such that $\N_q|_Z=\O_Z(q)$ and $\N_q|_{Z'}=\O_{Z'}$. Then 
$\ol{\alpha^1_f}|_C$ sends each $q\in C$ to:
\begin{equation}\label{first-def}
[\N_q\otimes\O_{\C}(\underset{q\in Z}{\underset{Z\text{ small tail}}{\sum}} Z)|_C)]\in J_C^1
\end{equation}

The morphism $\ol{\alpha^1_f}|_C$ does not depend on the choice of $f$, and factors through the immersion $J^{1,ss}_C\hookrightarrow J_C^1$.

\begin{Prop}\label{firstdeg}
Fix an integer $g\ge 2$. Let $C$ be a stable curve of compact type of genus $g$. Let $X^{pr}$ be the principal component of $C$. 
Let $\ul{e}_1$ be the multidegree such that $(e_1)_{X^{pr}}=1$ and $(e_1)_X=0$ for every irreducible component $X\subseteq C$ such that  
$X\neq X^{pr}$. 
Then the following properties hold:
\begin{itemize}
 \item[(i)] 
$J^{1,X^{pr}}_C=J^{\ul{e}_1}_C$;
\item[(ii)]
$\ol{\alpha^1_f}|_C$ factors through the immersion $J^{\ul{e}_1}_C \hookrightarrow J^{1,ss}_C$.
\end{itemize}
\end{Prop}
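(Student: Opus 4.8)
The plan is to prove (i) and (ii) together, deriving (ii) as a consequence of (i) and the explicit description of $\ol{\alpha^1_f}|_C$ given in (\ref{first-def}). The first step is to establish (i), i.e. that the only $X^{pr}$-quasistable multidegree of total degree $1$ is $\ul{e}_1$. For this I would translate $X^{pr}$-quasistability into the numerical conditions (\ref{BI1}) and (\ref{BI2}). Since $C$ is of compact type, every non-empty proper subcurve $Y\subsetneq C$ is a disjoint union of tails, and it suffices to check (\ref{BI1}) and (\ref{BI2}) on tails: indeed a subcurve $Y$ with $k_Y$ nodes splits as a disjoint union of $k_Y$ tails $Z_1,\dots,Z_{k_Y}$, and both $\deg L|_Y$, $\deg\omega_C|_Y$ and the bound add up, so semistability at $Y$ follows from semistability at each $Z_i$ (and conversely, using that it suffices to test one of $Y,Y'$). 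So fix a tail $Z$ with attaching node $n$; then $k_Z=1$, $\deg\omega_C|_Z = 2g_Z$, and writing $e:=\deg L|_Z$, condition (\ref{BI1}) reads $|e - \frac{g_Z}{g-1}| \le \frac12$, while (\ref{BI2}) (applied to any $Y\supseteq X^{pr}$, equivalently to the complementary tail $Z'$ when $Z'\supseteq X^{pr}$, i.e. when $X^{pr}\not\subseteq Z$) reads $\deg L|_{Z'} - \frac{g_{Z'}}{g-1} > -\frac12$, i.e. $1 - e - \frac{g - g_Z}{g-1} > -\frac12$, i.e. $e - \frac{g_Z}{g-1} < \frac{1}{2(g-1)}$ — wait, let me recompute: $1 - e - \frac{g-g_Z}{g-1} = 1 - e - 1 + \frac{g_Z - 1}{g-1}\cdot\frac{?}{}$; in any case the point is that (\ref{BI1}) forces $e\in\{0,1\}$ for small enough $g_Z$ and (\ref{BI2}) breaks the tie on one side.

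Concretely, I would argue as follows. Let $Z$ be a tail. Set $e = \deg L|_Z$. From (\ref{BI1}), $e$ is the unique integer with $|e - \frac{g_Z}{g-1}|\le\frac12$ unless $\frac{g_Z}{g-1}$ is a half-integer, which happens only when $2g_Z = g-1$... actually the delicate case is $g_Z = g/2$ (with $g$ even), where $\frac{g_Z}{g-1} = \frac{g}{2(g-1)}$, still strictly between $0$ and $1$ but we need to see which of $e=0,1$ is allowed: $|0 - \frac{g}{2(g-1)}| = \frac{g}{2(g-1)} > \frac12$ iff $g > g-1$, always true — hmm, so $e=0$ fails (\ref{BI1}) and likewise $e=1$ fails, contradiction. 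This signals I should recompute $\deg\omega_C|_Z$: for a tail $Z$ of a compact-type curve, $\omega_C|_Z = \omega_Z(n)$, so $\deg\omega_C|_Z = 2g_Z - 1$, not $2g_Z$. With this correction, $\frac{\deg\omega_C|_Z}{2g-2} = \frac{2g_Z-1}{2g-2}$, and (\ref{BI1}) becomes $|e - \frac{2g_Z-1}{2(g-1)}|\le\frac12$. For a small tail with $g_Z < g/2$ this pins down $e=0$; for $g_Z > g/2$, i.e. $Z$ a large tail, it pins down $e=1$; and in the boundary case $g_Z = g/2$ we get $\frac{2g_Z-1}{2(g-1)} = \frac{g-1}{2(g-1)} = \frac12$ exactly, so both $e=0$ and $e=1$ satisfy (\ref{BI1}), and now (\ref{BI2}) is exactly the condition that distinguishes the principal component: if $X^{pr}\subseteq Z'$ (so $Z$ is small by condition (2)), quasistability at $Z'\supseteq X^{pr}$ forces $\deg L|_{Z'} > \frac{g-1}{?}$... — the upshot being $\deg L|_Z = 0$ on small tails and $\deg L|_Z = 1$ on large tails, which is exactly the multidegree $\ul{e}_1$. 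Summing degrees over a maximal chain of tails containing a fixed component $X$ then gives $(e_1)_X = 1$ iff $X = X^{pr}$ and $0$ otherwise. This proves (i); then since $J^{\ul e_1}_C \subseteq J^{1,ss}_C$ (an $X$-quasistable multidegree is in particular semistable), the immersion in (ii) makes sense.

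For (ii), I would read off the multidegree of the line bundle in (\ref{first-def}). For $q$ a smooth point lying on component $X$, the bundle $\O_C(q)$ plus the twist $\O_{\C}(\sum_{Z \text{ small},\, q\in Z} Z)|_C$ — note $q$ smooth means $q\in Z$ for a small tail $Z$ only in the degenerate sense $Z = C$ which isn't a proper tail, so there is no twist and we should instead sum over small tails $Z$ with $q \in Z$; for $q$ smooth this sum is empty, so the bundle is just $\O_C(q)$, which has multidegree concentrated on $X$ — hmm, but that is only $\ul e_1$ if $X = X^{pr}$. The resolution is that the restriction $\O_{\C}(\sum Z)|_C$ is a twist by the divisors $Z$, and $\O_{\C}(Z)|_C$ has multidegree $-1$ on the tail $Z$ at its attaching node and... in fact this is precisely the standard mechanism (as in \cite{CE}, \cite{CCE}) by which twisting moves the degree onto $X^{pr}$: one checks that $\O_{\C}(\sum_{Z\text{ small}} Z)|_C$ contributes so that the total multidegree of (\ref{first-def}) is $\ul e_1$ for every $q$. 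I would verify this componentwise using the description of $\N_q$ (which already has multidegree supported on the small tail $Z$ containing a nodal $q$, namely degree $1$ on $Z$, degree $0$ on $Z'$) and the intersection numbers of the tails $Z$ with the components of $C$ inside the total space $\C$. Granting this computation, $\ol{\alpha^1_f}|_C$ lands in $J^{\ul e_1}_C$, and since we already know it factors through $J^{1,ss}_C$, combined with (i) identifying $J^{\ul e_1}_C = J^{1,X^{pr}}_C\subseteq J^{1,ss}_C$, the factorization in (ii) follows.

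The main obstacle I anticipate is the second step: carefully computing the multidegree of $\N_q\otimes\O_{\C}(\sum_{Z} Z)|_C$ and confirming it equals $\ul e_1$ independently of $q$ and of which component $q$ sits on. This requires being careful about which tails are "small", how the twisting divisors restrict to $C$ inside the smooth total space $\C$ (the self-intersection contributions at attaching nodes), and the telescoping when $q$ lies deep inside a chain of nested tails. The numerical part (i) is comparatively routine once the correct value $\deg\omega_C|_Z = 2g_Z - 1$ is in hand, though one must handle the boundary case $g_Z = g/2$ with care, using exactly the defining property of $X^{pr}$ (central vs. semicentral, via Lemma \ref{centracomp}) to see that the relevant inequality (\ref{BI2}) selects $\deg L|_Z = 0$ on small tails.
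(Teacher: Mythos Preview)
Your approach to (i) is essentially the paper's: reduce to tails (since $C$ is of compact type every connected proper subcurve is a tail), apply (\ref{BI1}) and (\ref{BI2}) with the corrected value $\deg\omega_C|_Z = 2g_Z-1$, and handle the boundary case $g_Z=g/2$ via the defining property of $X^{pr}$. One small omission: you argue only uniqueness, i.e.\ that any $X^{pr}$-quasistable multidegree of total degree $1$ equals $\ul{e}_1$, but you do not check that $\ul{e}_1$ is itself $X^{pr}$-quasistable. The paper does not verify this directly either; it closes the gap by first proving (ii) in the form ``$\ol{\alpha^1_f}|_C$ factors through $J^{1,X^{pr}}_C$'', which forces $J^{1,X^{pr}}_C\ne\emptyset$, and then invoking the uniqueness claim.

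For (ii) the two routes diverge. The paper does not compute the multidegree of the line bundle in (\ref{first-def}) at all: it simply quotes \cite[Theorem 5.5]{CCE}, which says that $\ol{\alpha^1_f}|_C$ factors through $J^{1,Y}_C$ for any component $Y$ not contained in a small tail, and observes that $X^{pr}$ is such a component. Combined with (i) this gives (ii) in one line. Your direct-computation route is viable in principle --- once one notes that a tail $Z$ is small if and only if $X^{pr}\not\subseteq Z$, the twist $\O_\C(\sum Z)|_C$ telescopes along the chain of nested small tails between the component carrying $q$ and $X^{pr}$, moving the degree onto $X^{pr}$ --- but you have not actually carried it out, and several intermediate assertions in your sketch (e.g.\ that for smooth $q$ the sum over small tails containing $q$ is empty) are wrong before your self-corrections. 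The paper's approach trades this computation for a citation; yours would be more self-contained but remains to be completed, and as written it also leaves the logical dependence between (i) and (ii) reversed relative to how the paper actually closes the argument.
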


\begin{proof} 
We claim that if $\ul{d}$ is a $X^{pr}$-quasistable multidegree of total degree 1, then $\ul{d}=\ul{e}_1$. We show this claim in 3 steps.

\smallskip

\emph{First step}. We show that if $Z$ is a tail of $C$, then $d_Z=1$ if $Z$ contains $X^{pr}$, and $d_Z=0$ otherwise. It 
 suffices to show that $d_Y=0$ for each tail $Y$ of $C$ such that that $X^{pr}\not\subseteq Y$. If $Y$ is such a tail, then there exists a connected component $W$ of $(X^{pr})'$ such that $Y\subset W$ and 
hence $g_Y\le g_W\le g/2$. Being $k_Y=1$, from (\ref{BI1}) and (\ref{BI2}) with $d=1$, we have:
$$-\frac12\leq d_Y- \frac{\deg(\omega_C|_Y)}{2g-2}< \frac{1}{2}.$$  Since $g_Y\le g/2$, we get:
$$0\leq\frac{\deg(\omega_C|_Y)}{2g-2}=\frac{2g_Y-1}{2g-2}\leq\frac12.$$
So $-1/2\leq d_Y<1$, and hence $d_Y=0$.

\smallskip

\emph{Second step}.
We show that $d_{X^{pr}}=1$. Let $Z_1,\ldots,Z_n$ be the connected components of $(X^{pr})'$. Then $g_{Z_i}\le g/2$ for every $i=1,\dots,n$ and by the first step we have $d_{Z_i}=0$ for every $i= 1\dots,n$. Now, $Z_1'$ is a tail of $C$ containing $X^{pr}$. Hence, again by the first step, $d_{Z_1'}=1$. Moreover, since $X^{pr}=\overline{Z_1'-(Z_2\cup\ldots\cup Z_n)}$, we get: 
$$d_{X^{pr}}=d_{Z_1'}-\sum_{2\leq i\leq n}d_{Z_i}=1.$$

\emph{Third step}. We show that if $\ul{d}$ is a $X^{pr}$-quasistable multidegree of total degree 1, then $\ul{d}=\ul{e}_1$.  
By the second step, we are done if we show that $d_X=0$ for every irreducible component $X$ of $C$ such that $X\ne X^{pr}$. Indeed, let $X$ be any such component.
Set $C_0:=C$. For every $j\ge 1$, define inductively $C_j:=\ol{C_{j-1}-Y_j}$, where $Y_j$ is the union of the irreducible components of $C_{j-1}$ which are tails of $C_{j-1}$ distinct from $X^{pr}$. Notice that $C_j$ is a curve of compact type, then if $C_j\ne X^{pr}$, then there exist at least two irreducible component of $C_j$ which are tails of $C_j$. In this way, if $C_j\ne X^{pr}$, then $Y_j\ne \emptyset$ and $C_{j+1}\subsetneq C_j$. Since $C$ has a finite number of components, there exists an integer $r\ge 0$ such that $X$ is a tail of $C_r$. Then there are irreducible components $X_1,\dots X_n$ of $C$ contained in $Y_1\cup Y_2\cup \dots \cup Y_r$ such that $X\cup X_1\cup\cdots\cup X_n$ is a tail of $C$ not containing $X^{pr}$ and $X_1\cup X_2\cup\cdots \cup X_n$ is a union of  tails of $C$ not containing $X^{pr}$. By the first step, $d_{X\cup X_1\cup\cdots \cup X_n}=d_{X_1\cup X_2\cdots \cup X_n}=0$, and hence:
$$d_X=d_{X\cup X_1\cup\dots\cup X_n}-d_{X_1\cup  X_2\cdots \cup X_n}=0.$$

In this way, we have shown the initial claim.

\smallskip

By \cite[Theorem 5.5]{CCE}, for every irreducible component $Y\subset C$ not contained in any small tail, we have that  $\ol{\alpha^1_f}|_C$ factors through the immersion $J^{1,Y}_C\hookrightarrow J^{1,ss}_C.$ By definition, $X^{pr}$ is not contained in any small tail of $C$, hence $\ol{\alpha^1_f}|_C$ factors through the immersion $J^{1,X^{pr}}_C\hookrightarrow J^{1,ss}_C$. This implies that $J^{1,X^{pr}}_C\ne\emptyset$. By the initial claim we have $J^{1,X^{pr}}_C=J^{\ul{e}_1}_C$ and we are done.
\end{proof}

\section{Abel maps for curves of compact type}

\subsection{The construction of the $d$-th Abel map}

In this section we will construct the Abel map for a smoothing of a stable curve of compact type.

\begin{Def}
Let $C$ be a stable curve of compact type with $\gamma$ components. 
Fix a multidegree $\ul{d}=(d_1,\dots, d_\gamma)$ of total degree $d$.
We say that a tail $Z$ of $C$ is a 
\emph{$\ul{d}$-big tail} if: 
$$d_Z\cdot \deg(\omega_C)-d \cdot \deg(\omega_C|_Z)<2g_Z-g_C.$$
Notice that if $\ul{d}=(0,0,\dots,0)$, then 
the notion of $\ul{d}$-big tail coincides with the notion of big tail in 
\cite{CE}. For each irreducible component $X$ of $C$ and for each multidegree 
$\ul{d}$, define:
$$\T_{\ul{d}}(X):=\{Z\subset C : Z \tx{ is a $\ul{d}$-big tail of } C \tx{ and } Z\nsupseteq X\}.$$
\end{Def}

Let $f\:\C\ra B$ be a smoothing of a stable curve $C$ of compact type.
For each $d\ge 1$, let $B_d\ra B$ be an \'etale morphism such that, if we consider the pull-back family:
$$
\Y_d:=\C\times_{B_d}B\stackrel{f_d}{\lra}B_d,
$$ 
then $J^1_{f_d}$ and $J^d_{f_d}$ are schemes and there exists a universal line bundle $\L_1$ (respectively $\L_d$) on $J^1_{f_d}\times_{B_d}\Y_d$ 
(respectively $J^d_{f_d}\times_{B_d} \Y_d$). 
We have a natural \'etale morphism $J^d_{f_d}\ra J^d_f$. For each multidegree $\ul d$ of total degree $d$, 
consider the  line bundle on $J^1_{f_d}\times_{B_d} \Y_d$:
\begin{equation}\label{modif}
T_{\ul d}:=p_1^*\mathcal O_{\C}(\underset{Z\in\T_{\ul{d}}(X^{pr})}{-\sum}Z),
\end{equation}
 where $p_1\:J^1_{f_d}\times_{B_d} \Y_d\ra \Y_d\ra \C$ is the composition of the second projection and the base change morphism, and $X^{pr}$ is the principal component of $C$. Let: 
$$p_2\:J^d_{f_d}\times_{B_d} J^1_{f_d}\times_{B_d} \Y_d\lra J^d_{f_d}\times_{B_d}\Y_d$$
$$p_3\:J^d_{f_d}\times_{B_d} J^1_{f_d}\times_{B_d} \Y_d\lra J^1_{f_d}\times_{B_d} \Y_d$$
be the projections and consider the composition:
$$\theta^{\ul d}_{f_d} \: J^d_{f_d}\times_{B_d} J^1_{f_d} \lra J^{d+1}_{f_d}\lra J^{d+1}_f,$$
where the first morphism is induced by 
$p_2^*(\L_d)\otimes p_3^*(\L_1)\otimes p_3^*(T_{\ul d})$. Set: $$U:=J^d_{f_d}\times_{B_d} J^1_{f_d} \text{ and } V:= J^d_f\times_B J^1_f.$$  
Let $q_1\:U\times_V U\ra U$ and $q_2\:U\times_V U\ra U$ be the two projections. 
Notice that $T_{\ul{d}}$ is the pull-back of a line bundle on $\C$. Then  $\theta^{\ul d}_{f_d}\circ q_1=\theta^{\ul d}_{f_d}\circ q_2$. Since $U$ is \'etale over $V$, by the flat descent there exists a morphism:
$$V=J^d_f\times_B J^1_f \stackrel{\theta^{\ul d}_f }{\lra} J^{d+1}_f$$
such that $\theta^{\ul{d}}_{f_d}$ factors via  $\theta^{\ul d}_f$.
Define $\ol{\alpha^2_f}$ as the composition:
$$\ol{\alpha^2_f}\:\C^2\stackrel {\ol{\alpha^1_f}\times\ol{\alpha^1_f}}{\lra}J^{1}_f \times_B J^{1}_f\stackrel{\theta^{\ul {e}_1}_f}{\lra}J^2_f,$$
where $\ul e_1$ is the multidegree defined in Proposition \ref{firstdeg}.
Due to the decomposition (\ref{Jacobdecomp}), there exists a unique multidegree $\ul e_2$ of total degree 2 such that $\ol{\alpha^2_f}|_{C^2}$ factors through the immersion 
$J_C^{\ul e_2}\hookrightarrow J_C^2$. This allows us to define: 
$$\ol{\alpha^3_f}\:\C^3\stackrel {\ol{\alpha^2_f}\times\ol{\alpha^1_f}}{\lra}J^{2}_f \times_B J^{1}_f\stackrel{\theta^{\ul {e}_2}_f}{\lra}J^3_f.$$
Arguing as before, $\ol{\alpha^3_f}|_{C^3}$ factors through the immersion $J_C^{\ul e_3}\hookrightarrow J_C^3$ for a unique multidegree $\ul e_3$ of total degree 3. Iterating, for every $d\geq2$,  we get a map:
$$
\ol{\alpha^d_f}\:\C^d\stackrel {\ol{\alpha^{d-1}_f}\times\ol{\alpha^1_f}}{\lra}J^{d-1}_f \times_B J^1_f\stackrel{\theta^{\ul{e}_{d-1}}_f}{\lra}J^d_f.
$$
and a unique multidegree $\ul e_d$ of total degree $d$ such that $\ol{\alpha^d_f}|_{C^d}$ factors through the immersion $J_C^{\ul e_d}\hookrightarrow J_C^d$.

\begin{Def}\label{Main-Def}
We call $\ol{\alpha^d_f}$ the \emph{$d$-th Abel maps} of the family $f$, for every $d\ge 1$. 
\end{Def}

Notice that the definition of $\ol{\alpha^d_f}$ only depends on the choice of the principal component of $C$, hence, from Lemma \ref{centracomp}, it is canonical if and only if $[C]\notin \Delta_{g/2}$. 
If $[C]\in \Delta_{g/2}$, then Lemma \ref{centracomp} implies that we get two $d$-th Abel maps. We will discuss the details of the special case in Section \ref{special}.

\smallskip

If $f\:\C\ra B$ is a smoothing of a curve of compact type and $D$ is a divisor 
of $\C$, then we set $\mathcal O_C(D):=\mathcal O_\C(D)\otimes\mathcal O_C$. Of course, being $C$ of compact type, the line bundle $\mathcal O_C(D)$ does not depend on the choice of $f$.

\begin{Lem}\label{twist}
Fix an integer $g\ge 2$. Let $C$ be a stable curve of compact type of genus $g$. Let $X$ be an irreducible component of $C$. Let $M$ be a 
$X$-quasistable line bundle on $C$ of multidegree $\ul{d}$. 
If $M'$ is a line bundle having degree 1 on $X$ and degree 0 on the other components of $C$, 
then the line bundle: 
$$M\ot M'\otimes\O_C(\underset{Z\in\T_{\ul{d}}(X)}{-\sum} Z)$$
is $X$-quasistable.
\end{Lem}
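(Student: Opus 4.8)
The plan is to work with the numerical characterizations \eqref{BI1} and \eqref{BI2} and analyze, tail by tail, how passing from $M$ to the twisted sheaf $M\ot M'\ot\O_C(-\sum_{Z\in\T_{\ul d}(X)} Z)$ changes the multidegree. Write $\ul{d}'$ for the multidegree of the twisted sheaf. For a tail $W$ of $C$, the three twisting operations contribute as follows: tensoring with $M'$ adds $1$ to $d_W$ if $X\subseteq W$ and $0$ otherwise; and tensoring with $\O_C(-\sum_{Z\in\T_{\ul d}(X)} Z)$ changes $\deg(\cdot|_W)$ by a combinatorial quantity counting, for each big tail $Z$, whether the separating node defining $Z$ lies on $W$, on $W'$, or is internal to one side. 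The first step is therefore to set up this bookkeeping cleanly: for a tail $W$ with defining node $n$, and for a big tail $Z$ with defining node $m$, the key dichotomy is whether $Z$ (or $Z'$) is contained in $W$ or in $W'$. Since $C$ is of compact type, any two tails are nested or disjoint in an appropriate sense, so these cases are exhaustive.

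Next I would reduce to checking $X$-quasistability at each tail $W$ of $C$ (it suffices to check \eqref{BI1} at tails and subcurves, but the subtlety is concentrated at tails since that is where the twisting divisor is supported — more precisely one must check \eqref{BI1} for all proper subcurves and \eqref{BI2} for those containing $X$, but because $\O_C(-\sum Z)$ is built from tails, the relevant estimates localize). Fix a tail $W$; I expect the argument to split according to whether $X\subseteq W$ or $X\subseteq W'$. The heart of the matter is: if $W$ is itself a $\ul d$-big tail not containing $X$, then $W\in\T_{\ul d}(X)$ and subtracting $W$ decreases $d_W$ by an amount that exactly compensates the "bigness" of $W$, pushing $d'_W$ back into the allowed band $[-k_W/2 + \frac{d+1}{2g-2}\deg\omega_C|_W,\ k_W/2 + \cdots]$; conversely if $W$ contains $X$ then $W'$ cannot be a big tail not containing $X$ in the wrong way, and the $+1$ from $M'$ is what is needed for the strict inequality \eqref{BI2}. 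The definition of $\ul d$-big tail is precisely calibrated (via $d_Z\deg\omega_C - d\deg\omega_C|_Z < 2g_Z - g_C$) so that a tail is big exactly when its $M$-degree violates the degree-$(d{+}1)$ semistability bound by just enough that subtracting it once restores the inequality.

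I would then carry out the case analysis: (a) $W\cap\T_{\ul d}(X)$ empty and $X\not\subseteq W$ — here $d'_W$ may differ from $d_W$ only by internal subtractions, and one checks $W$ was not big hence the $M$-bound already sits in the right sub-band; (b) $W$ big and $X\not\subseteq W$, the main compensating case; (c) $X\subseteq W$, where one uses that $W'$ (resp. any big tail inside $W'$) is subtracted, $M'$ adds $1$ on the $X$-side, and one verifies the strict inequality \eqref{BI2} survives; handling also the subtraction of big tails strictly contained in $W$ or in $W'$, which shift $d'_W$ by a controlled integer without leaving the band. Throughout, the genus-of-compact-type additivity $g_C = \sum g_{X_i}$ and $\deg\omega_C|_W = 2g_W - k_W = 2g_W - 1$ for a tail $W$ are used to convert the big-tail inequality into a statement about where $d_W$ and $d'_W$ lie relative to $\frac{d}{2g-2}\deg\omega_C|_W \pm \frac12$ versus $\frac{d+1}{2g-2}\deg\omega_C|_W \pm \frac12$.

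The main obstacle I anticipate is the interaction of \emph{nested} big tails: if $Z_1\subsetneq Z_2$ are both in $\T_{\ul d}(X)$, then for a tail $W$ with $Z_1\subseteq W\subsetneq Z_2$ the subtraction of $Z_2$ affects $\deg(\cdot|_W)$ differently than the subtraction of $Z_1$, and one must check that these do not overshoot — i.e. that $d'_W$ does not drop below $-k_W/2 + \frac{d+1}{2g-2}\deg\omega_C|_W$. Resolving this should come down to showing that "big" is not too destructive: a big tail $Z$ has $d_Z$ small (roughly $d_Z < \frac{d}{2g-2}\deg\omega_C|_Z - \frac12 + $ an error of order $\frac{g_Z}{g}$), so subtracting it lowers the degree by exactly the one unit needed and no more, and nested big tails subtract along disjoint "annular" regions whose contributions to a given $W$ telescope. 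Once the telescoping is made precise the inequalities follow, and the same estimates give the strict version \eqref{BI2} on tails containing $X$, completing the proof that $M\ot M'\ot\O_C(-\sum_{Z\in\T_{\ul d}(X)}Z)$ is $X$-quasistable.
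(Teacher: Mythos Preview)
Your broad strategy --- reduce to the numerical conditions \eqref{BI1}, \eqref{BI2} on tails and use the definition of $\ul d$-big tail to show the twist lands in the $(d{+}1)$-semistability band --- is correct and close to what the paper does. But you have the key degree computation wrong, and this is what manufactures the phantom ``nested big tails'' obstacle.

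First, the sign: for a tail $W$ one has $\deg(\O_C(-W)|_W)=+k_W=+1$, not a decrease. Indeed $\O_C(-W)\simeq\O_C(W')$ and $W'\cdot W=1$. This is consistent with the logic: the paper's first step shows that a tail $Z$ with $X\not\subseteq Z$ is $\ul d$-big exactly when $d_Z$ falls \emph{below} the lower $(d{+}1)$-semistability bound (equation \eqref{cond1}), so the twist must \emph{raise} $d_Z$, not lower it.

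Second, and this is the computation you are missing: for any two tails $W,Z$ with $X\not\subseteq W$ and $X\not\subseteq Z$, one has $\deg(\O_C(-Z)|_W)$ equal to $1$ if $Z=W$ and $0$ otherwise. The line bundle $\O_C(-Z)$ has degree $+1$ on the unique component of $Z$ meeting $Z'$, degree $-1$ on the unique component of $Z'$ meeting $Z$, and $0$ elsewhere; hence its degree on $W$ vanishes unless the separating node of $Z$ coincides with that of $W$, which (since neither tail contains $X$) forces $Z=W$. So if $Z_1\subsetneq W\subsetneq Z_2$ are all in $\T_{\ul d}(X)$, the twists by $-Z_1$ and $-Z_2$ contribute nothing at all to $d''_W$: there is no telescoping to do and nothing can overshoot. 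With this in hand the proof is short: for every tail $W$ with $X\not\subseteq W$ one gets $d''_W=d_W+[W\in\T_{\ul d}(X)]$, which lies in the $(d{+}1)$-band precisely because the indicator equals $1$ exactly when $d_W$ was below that band (this is the content of the paper's second step, equations \eqref{cond2}--\eqref{cond4}). Semistability at an arbitrary connected subcurve then follows by summing over the tail components of its complement, and the strict inequality \eqref{BI2} on the $X$-side comes from the same bookkeeping together with the extra $+1$ contributed by $M'$.
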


\begin{proof}
Let $\ul{d}'$ be the multidegree of $M\ot M'$ and $\ul{d}''$ the multidegree of: $$M\ot M'\otimes\O_C(\underset{Z\in\T_{\ul{d}}(X)}{-\sum} Z).$$ Let $d$, $d'$ and $d''$ be respectively the total degrees of $\ul{d}$, $\ul{d}'$ and  $\ul{d}''$.

\smallskip

\emph{First step}. 
Let $Z$ be a tail of $C$ not containing $X$. We claim that $Z$ is $\ul{d}$-big if and only if: 
\begin{equation}\label{cond1}
d'_Z-d'\frac{\deg(\omega_C|_Z)}{2g-2}<-\frac{1}{2}.
\end{equation}
Indeed, using $d'_Z=d_Z$, $d'=d+1$ and $\deg(\omega_C|_Z)=2g_Z-1$, we see that (\ref{cond1}) is equivalent to: 
$$d_Z\cdot \deg(\omega_C)-d \cdot \deg(\omega_C|_Z)<2g_Z- g.$$

\emph{Second step}.
We claim that $\ul{d}''$ is semistable 
at any connected subcurve $Y\subsetneq C$ such that 
$X\subseteq Y$. 
Indeed, let $Z$ be any connected component of $Y'$. Then $Z$ does not contain $X$. Furthermore, $Z$ is a tail, because $Y$ is connected. Since $\ul{d}$ is semistable at $Z$ and $d'=d+1$ and $d'_Z=d_Z$, we have: 
$$
-\frac{3}{2}=-\frac{k_Z}{2}-1\le d'_Z-d'\frac{\deg(\omega_C|_Z)}{2g-2}\le\frac{k_Z}{2}=\frac{1}{2}.
$$

Thus, using (\ref{cond1}), if $Z\in\T_{\ul{d}}(X)$, we have:
\begin{equation}\label{cond2}
-\frac{3}{2}\le d'_Z-d'\frac{\deg(\omega_C|_Z)}{2g-2}<-\frac{1}{2}.
\end{equation}
while if $Z\notin\T_{\ul{d}}(X)$, we have:
\begin{equation}\label{cond3}
-\frac{1}{2}\le d'_Z-d'\frac{\deg(\omega_C|_Z)}{2g-2}\le\frac{1}{2}.
\end{equation}

Suppose that $Z\in\T_{\ul{d}}(X)$. Then, being $Z'\notin \T_{\ul{d}}(X)$, we have $d''_Z=d'_Z+k_Z=d'_Z+1$. Hence by (\ref{cond2}) and $d''=d'$:
\begin{equation}\label{cond4}
-\frac{1}{2}\le d''_Z-d''\frac{\deg(\omega_C|_Z)}{2g-2}<\frac{1}{2}.
\end{equation}
and $\ul{d}''$ is semistable at $Z$.
If $Z\notin\T_{\ul{d}}(X)$, then $d''_Z=d'_Z$, and hence, by (\ref{cond3}), 
$\ul{d}''$ is semistable at $Z$. In particular, $\ul{d}''$ is semistable at $Y'$, hence it is semistable at $Y$.

\smallskip

\emph{Third step}. 
We show that $\ul{d}''$ is semistable
at any non-empty connected subcurve $Y\subsetneq C$ not containing $X$. Indeed, if $\ul{d}''$ is not semistable at any such $Y$, then it is not semistable at $Y'$. Write $Y'=Y_1\cup Y_2$, where $Y_1$ is connected, 
$Y_1\cap Y_2=\emptyset$ and $X\subseteq Y_1$.  By the second step, $\ul{d}''$ is semistable at $Y_1$. Then $Y_2\ne\emptyset$, because otherwise we would have $Y'=Y_1$ and $\ul{d}''$ would be semistable at $Y'$. Notice that $Y'_2=Y\cup Y_1$, hence $Y'_2$ is connected and $X\subset Y'_2$. Again by the third step, $\ul{d}''$ is semistable at $Y'_2$.

If $\ul{d}''$ is semistable at $Y_2$, then $\ul{d}''$ is semistable at 
$Y_1\cup Y_2=Y'$, because $Y_1\cap Y_2=\emptyset$, which is a contradiction. Thus, $\ul{d}''$ is not semistable at $Y_2$, hence it is not semistable at $Y'_2$, which is again a contradiction.

\smallskip

\emph{Fourth step}. We show that $\ul{d}''$ is $X$-quasistable.  By the third and fourth steps, $\ul{d}''$ is semistable 
at any non-empty connected subcurve $Y\subsetneq C$, hence $\ul{d}''$ is semistable. 
Thus we are done if we show that $\ul{d}''$ is $X$-quasistable at any non-empty  subcurve $Y\subsetneq C$ such that $X\subseteq Y$.
Let $Y$ be any such subcurve. We can assume without loss of generality that $Y$ is connected. We distinguish two cases.
If there exists a connected component $Z$ of $Y'$ such that  $Z\in\T_{\ul{d}}(X)$, then from (\ref{cond3}) and (\ref{cond4}) we have:
$$d''_{Y'}-d''\frac{\deg(\omega_C|_{Y'})}{2g-2}<\frac{k_Y}{2}.$$
Using that $d''_{Y'}=d''-d''_Y$ and 
$\deg(\omega_C|_{Y'})=\deg(\omega_C)-\deg(\omega_C|_Y)$,  
we see that $\ul{d}''$ is $X$-quasistable at $Y$. 

If $Z\notin\T_{\ul{d}}(X)$, for each connected component $Z$ of $Y'$, then $d''_Y=d'_Y$. Then, since $\ul{d}$ is $X$-quasistable at $Y$ and using that 
$d_Y=d'_Y-1$ and $d=d'-1$, we have: 
$$
d'_Y-d'\frac{\deg(\omega_C|_{Y})}{2g-2}>-\frac{k_Y}{2}.
$$ 
Since $d''=d'$ and $d''_Y=d'_Y$, we see that  $\ul{d}''$ is $X$-quasistable at $Y$.
\end{proof}

\begin{Thm}\label{abelcompact}
Fix integers $g\ge 2$ and $d\ge 1$. Let $f\:\C\ra B$ be a smoothing of a stable curve $C$ of compact type of genus $g$ and $X^{pr}$ be the principal component of $C$. Let $\ol{\alpha^d_f}|_{C^d}\:C^d \ra J_C^{\ul{e}_d}$ 
be the restriction of the $d$-th Abel map of $f$ to the special fiber.
Then the following properties hold:
\begin{itemize}
\item[(i)]
$\ul{e}_d$ is a $X^{pr}$-quasistable multidegree.
\item[(ii)]
$\ol{\alpha^d_f}|_{C^d}$
does not depend on the choice  of $f$ and  factors via a morphism  
$\beta^d_C\:S^d(C)\ra J_C^{\ul{e}_d}$. 
\end{itemize}
\end{Thm}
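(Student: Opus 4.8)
The plan is to prove the two assertions of Theorem \ref{abelcompact} simultaneously, by induction on $d$, following the inductive construction of $\ol{\alpha^d_f}$ given just above. For $d=1$ both statements are already available: part (i) is the identification $J^{1,X^{pr}}_C = J^{\ul e_1}_C$ from Proposition \ref{firstdeg}(i) together with $X^{pr}$-quasistability of $\ul e_1$, and the independence-of-$f$ and factorization through $S^1(C)=C$ are the recalled properties of the first Abel map stated after (\ref{first-def}) (the factorization through $S^1(C)$ being trivial). So assume $d\ge 2$ and that the theorem holds for $d-1$; in particular $\ul e_{d-1}$ is a $X^{pr}$-quasistable multidegree and $\ol{\alpha^{d-1}_f}|_{C^{d-1}}$ is independent of $f$ and factors through $S^{d-1}(C)$.

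The key step is a pointwise computation of $\ol{\alpha^d_f}|_{C^d}$ on the special fiber, unwinding the definition of $\theta^{\ul e_{d-1}}_f$ and of the line bundle $T_{\ul e_{d-1}}$ in (\ref{modif}). Concretely, for $(p_1,\dots,p_d)\in C^d$, if $L:=\ol{\alpha^{d-1}_f}|_{C^{d-1}}(p_1,\dots,p_{d-1})\in J^{\ul e_{d-1}}_C$ and $N:=\ol{\alpha^1_f}|_C(p_d)\in J^{1,X^{pr}}_C=J^{\ul e_1}_C$, then chasing the maps $p_2^*\L_{d-1}\otimes p_3^*\L_1\otimes p_3^*T_{\ul e_{d-1}}$ through the étale descent shows
$$\ol{\alpha^d_f}|_{C^d}(p_1,\dots,p_d)=L\otimes N\otimes \O_C\Bigl(\underset{Z\in\T_{\ul e_{d-1}}(X^{pr})}{-\sum}Z\Bigr).$$
Here $N$ has degree $1$ on $X^{pr}$ and degree $0$ on every other component (by Proposition \ref{firstdeg}), so $N$ plays the role of $M'$ in Lemma \ref{twist} with $X=X^{pr}$ and $M=L$, $\ul d=\ul e_{d-1}$. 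Lemma \ref{twist} then says the resulting line bundle is $X^{pr}$-quasistable, and since its multidegree is the fixed $\ul e_d$ (independent of the point, by the uniqueness clause in the construction), we conclude that $\ul e_d$ is a $X^{pr}$-quasistable multidegree. This gives (i).

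For (ii), the displayed pointwise formula exhibits $\ol{\alpha^d_f}|_{C^d}$ as built out of $\ol{\alpha^{d-1}_f}|_{C^{d-1}}$, $\ol{\alpha^1_f}|_C$ and the divisor $\O_C(-\sum_{Z\in\T_{\ul e_{d-1}}(X^{pr})}Z)$, each of which is independent of $f$ (the first two by the inductive hypothesis and the recalled property of the first Abel map; the twisting divisor because it depends only on $\ul e_{d-1}$, $X^{pr}$ and the combinatorics of $C$, and because $\O_C(D)$ does not depend on $f$ for $C$ of compact type, as noted before Lemma \ref{twist}). Hence $\ol{\alpha^d_f}|_{C^d}$ is independent of $f$. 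For the factorization through $S^d(C)$: it suffices to check invariance of $\ol{\alpha^d_f}|_{C^d}$ under the transpositions generating the symmetric group. The transpositions fixing the last coordinate act only on the first $d-1$ arguments, and invariance under these follows from the inductive factorization of $\ol{\alpha^{d-1}_f}|_{C^{d-1}}$ through $S^{d-1}(C)$. For the transposition $(d-1\ \ d)$ swapping the last two arguments, one uses the formula together with the well-known symmetry of Abel-type maps — more precisely, one re-expresses $\ol{\alpha^d_f}|_{C^d}$ symmetrically in $p_{d-1},p_d$, e.g. by writing it via $\ol{\alpha^{d-2}_f}|_{C^{d-2}}(p_1,\dots,p_{d-2})$ tensored with a term symmetric in $p_{d-1},p_d$, or by invoking that the generic (smooth-fiber) Abel map $\alpha^d_f$ is symmetric and the extension is determined by its restriction to the dense open locus; since both $\ol{\alpha^d_f}|_{C^d}$ and its composition with $(d-1\ \ d)$ agree with the symmetric $\alpha^d_f$ on nearby smooth fibers and are morphisms to the separated scheme $J^d_f$ over $B$, they coincide on the special fiber. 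This produces the factoring morphism $\beta^d_C\colon S^d(C)\to J^{\ul e_d}_C$, and its independence of $f$ is inherited from that of $\ol{\alpha^d_f}|_{C^d}$.

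The main obstacle I expect is the bookkeeping in the pointwise formula for $\ol{\alpha^d_f}|_{C^d}$: one must carefully track the universal bundles $\L_{d-1}$, $\L_1$ and the twist $T_{\ul e_{d-1}}=p_1^*\O_{\C}(-\sum Z)$ through the étale base change $B_d\to B$ and the flat descent defining $\theta^{\ul e_{d-1}}_f$, and verify that the twisting term $\O_{\C}(-\sum Z)|_C$ really restricts to $\O_C(-\sum_{Z\in\T_{\ul e_{d-1}}(X^{pr})}Z)$ with the correct sheaf-theoretic normalization so that Lemma \ref{twist} applies verbatim. Once the formula is pinned down, verifying $X^{pr}$-quasistability is an immediate appeal to Lemma \ref{twist}, and the descent/symmetry arguments are routine given separatedness of $J^d_f\to B$ and density of the smooth locus.
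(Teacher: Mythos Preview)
Your inductive argument for (i) and for the independence of $f$ matches the paper's proof: the pointwise formula on the special fiber followed by an appeal to Lemma~\ref{twist} with $M=L$, $M'=N$, $X=X^{pr}$, $\ul d=\ul e_{d-1}$.

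For the symmetric-group invariance in (ii), your option~(b) has a genuine gap. The relative generalized Jacobian $J^d_f\to B$ is \emph{not} separated when the special fiber is reducible: for any component $X_i$ of $C$, the sheaves $\O_\C$ and $\O_\C(X_i)$ define sections of $J^0_f\to B$ that agree on $B\setminus\{0\}$ but differ at $0$. Hence one cannot conclude that two morphisms $\C^d\to J^d_f$ coinciding over the smooth locus agree on the special fiber; indeed, the entire purpose of the twist $T_{\ul d}$ in (\ref{modif}) is to single out one extension among the many extensions of the generic Abel map.

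Your option~(a), on the other hand, is correct and is what the paper actually carries out. Rather than treating one transposition at a time, the paper unwinds the recursion completely: setting
\[
P_d:=\bigotimes_{1\le i\le d-1}\O_{J^1_C\times C}\Bigl(-\sum_{Z\in\T_{\ul e_i}(X^{pr})}J^1_C\times Z\Bigr),
\]
one obtains a factorization
\[
\alpha^d_C\colon C^d \xrightarrow{\;\alpha^1_C\times\cdots\times\alpha^1_C\;} (J^{1,X^{pr}}_C)^d \xrightarrow{\;\rho_d\;} J^{\ul e_d}_C,
\]
with $\rho_d$ induced by $q_1^*\L_1\otimes\cdots\otimes q_d^*\L_1\otimes q_d^*P_d$. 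Since $P_d$ is pulled back from the factor $C$ alone, this bundle is visibly invariant under permutation of the $J^{1,X^{pr}}_C$ factors, giving $S_d$-invariance of $\alpha^d_C$ in one stroke and with no separatedness hypothesis on $J^d_f$. Your sketch ``write it via $\ol{\alpha^{d-2}_f}|_{C^{d-2}}$ tensored with a term symmetric in $p_{d-1},p_d$'' is precisely the first step of this unwinding; iterating it yields the paper's formula.
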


\begin{proof} 
Since $C$ is a proper scheme, then $J^d_C$ is a scheme and there exists a universal degree-$d$ line bundle $\L_d$ over $J^d_C\times C$. 
Call: $$\phi_d\:J^d_C\times C\ra J^d_C$$ the projection. 
We show (i) and that $\ol{\alpha^d_f}|_{C^d}$ does not depend on the choice of $f$, arguing by induction on $d$. If $d=1$, then we are done by Proposition \ref{firstdeg}. 
Consider:
$$\ol{\alpha^{d+1}_f}|_{C^{d+1}}\:C^{d+1}\stackrel {\ol{\alpha^d_f}|_{C^d}\times\ol{\alpha^1_f}|_C}{\lra}J^{\ul{e}_d}_C \times  J^{1,X^{pr}}_C\stackrel{\theta^{\ul{e}_d}_f}{\lra}J^{\ul{e}_{d+1}}_C\hookrightarrow J^{d+1}_C.$$
By induction, $\ol{\alpha^d_f}|_{C^d}$ does not depend on the choice of $f$, thus 
$\ol{\alpha^{d+1}_f}|_{C^{d+1}}$ does not depend on the choice of $f$ either.
Take a point $p$ in the image of $\ol{\alpha^{d+1}_f}|_{C^{d+1}}$.  Let $L_p$ be the restriction of $\L_{d+1}$ to the fiber $C_p=\phi_{d+1}^{-1}(p)$. By construction, we have:
$$L_p=M_p\otimes M'_p\otimes M''_p,$$ where $[M_p]\in J^{\ul{e}_d}_C$ and $[M'_p]\in J^{1,X^{pr}}_C$, and by (\ref{modif}): 
$$M''_p\simeq \O_{\C}(\underset{Z\in\T_{\ul{e}_d}(X^{pr})}{-\sum} Z)|_C.$$ 
By induction, 
$\ul e_d$ is $X^{pr}$-quasistable. Hence,  
by Lemma \ref{twist}, $L_p$ is a $X^{pr}$-quasistable line bundle 
and $\ul{e}_{d+1}$ is a $X^{pr}$-quasistable multidegree. 

To complete the proof, we show  that $\ol{\alpha^d_f}|_{C^d}$ is invariant under the action of the $d$-th symmetric group on $C^d$. Indeed, for every $d\ge 1$, pick the following line bundle of $J^1_C\times C$:
$$P_d:=\underset{1\le i\le d-1}\bigotimes  \O_{J^1_C\times C}(\underset{Z\in\T_{\ul{e}_i}(X^{pr})}{-\sum} J^1_C\times Z).$$ 
For every $i\ge 1$, consider the projection:
$$q_i\:J^{1,X^{pr}}_C\times\cdots\times J^{1,X^{pr}}_C\times C\ra J^{1,X^{pr}}_C\times C$$
onto the product of the $i$-th factor and $C$. Then $\ol{\alpha^d_f}|_{C^d}$ factors as:
$$\ol{\alpha^d_f}|_{C^d}\:C^d\stackrel{\ol{\alpha^1_f}|_C\times\cdots\times\ol{\alpha^1_f}|_C}{\lra}J^{1,X^{pr}}_C\times\cdots\times J^{1,X^{pr}}_C\stackrel{\rho_d}{\lra}J^{\ul{e}_d}_C$$ 
where $\rho_d$ is the morphism induced by $q_1^*\L_1\otimes\cdots \otimes q_d^*\L_1\otimes q_d^*P_d$. We see that 
$\ol{\alpha^d_f}|_{C^d}$ is invariant under the action of the $d$-th symmetric group on $C^d$.
\end{proof}

\begin{Def}
Keep the notations of Theorem \ref{abelcompact}. We set $\alpha^d_C:=\ol{\alpha^d_f}|_{C^d}$. We call $\alpha^d_C$ the \emph{$d$-th Abel map of $C$}, and $\beta^d_C$ the \emph{symmetric $d$-th Abel map of $C$}.
\end{Def}

\begin{Exa}
Fix an integer $g\ge 2$. Let $C$ be a stable curve of compact type of genus $g$  with two components $C_1$ and $C_2$ such that $g_{C_1}\ge g_{C_2}$.  Let $X^{pr}$ be the principal component of $C$ and $n$ be the node of $C$. Set $n_1:=C_1\cap C_2\subseteq C_1$ and $n_2:=C_1\cap C_2\subseteq C_2$. We may assume without loss of generality that $X^{pr}=C_1$. Then $\ul{e}_1=(1,0)$ and: 
$$\beta^1_C(n)\otimes\mathcal O_{C_1}\simeq\mathcal O_{C_1}(n_1) \text{ and }\beta^1_C(n)\otimes\mathcal O_{C_2}\simeq\mathcal O_{C_2}.$$ 
It is easy to see that: 
$$
\T_{\ul{e}_1}(X^{pr})=
\begin{cases}
\begin{array}{ll}
\{C_2\} & \text{ if } 4g_{C_2}>g+1 \\
\emptyset & \text{ if } 4g_{C_2}\le g+1 
\end{array}
\end{cases}
$$
and hence:
$$\beta^2_C(n,n)\otimes\mathcal O_{C_1}\simeq
\begin{cases}
\begin{array}{ll}
 \mathcal O_{C_1}(n_1) & \text{ if } 4g_{C_2}>g+1 \\
\mathcal O_{C_1}(2n_1) & \text{ if } 4g_{C_2}\le g+1
\end{array}
\end{cases}
$$ 
$$\beta^2_C(n,n)\otimes\mathcal O_{C_2}\simeq
\begin{cases}
\begin{array}{ll}
\mathcal O_{C_2}(n_2) & \text{ if } 4g_{C_2}>g+1 \\
\mathcal O_{C_2} & \text{ if } 4g_{C_2}\le g+1 
\end{array}
\end{cases}
$$
Thus $\ul{e}_2=(1,1)$ if $4g_{C_2}>g+1$ and $\ul{e}_2=(2,0)$ if $4g_{C_2}\le g+1$. 
\end{Exa}

In Proposition \ref{hyper-char}, we will give a characterization of hyperelliptic curves of compact type with two components, via the symmetric $2$-nd Abel map. 
Recall that a stable curve $C$ of genus $g\ge 2$ is hyperelliptic if the closure of the locus in $\ol{M_g}$ of hyperelliptic smooth curves of genus $g$ contains $[C]$.

\begin{Prop}\label{hyper-char}
Fix an integer $g\ge 2$. Let $C$ be a stable curve of compact type of genus $g$  with two components. Then $C$ is hyperelliptic if and only if there exists a fiber of the symmetric $2$-nd Abel map $\beta^2_C$ of $C$ consisting of two smooth rational curves intersecting in one point.
\end{Prop}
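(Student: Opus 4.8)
The plan is to make the characterization completely explicit by computing the fibers of $\beta^2_C$ on a two-component curve $C = C_1 \cup C_2$ with $g_{C_1} \ge g_{C_2}$, $X^{pr} = C_1$, and node $n = \{n_1\} = \{n_2\}$. From the worked Example above we already know $\ul{e}_2 = (1,1)$ if $4g_{C_2} > g+1$ and $\ul{e}_2 = (2,0)$ otherwise, so by (\ref{iso-compo}) we have $J_C^{\ul{e}_2} \cong J_{C_1}^{a} \times J_{C_2}^{b}$ with $(a,b) \in \{(1,1),(2,0)\}$. First I would write down $\beta^2_C$ explicitly on $S^2(C)$. A point of $S^2(C)$ is an unordered pair $\{p,q\}$ of points of $C$; decomposing according to whether $p,q$ lie on $C_1$, on $C_2$, or one on each, and using the factorization $\ol{\alpha^2_f}|_{C^2} = \rho_2 \circ (\ol{\alpha^1_f}|_C)^{\times 2}$ from the proof of Theorem \ref{abelcompact} together with the formula (\ref{first-def}) for $\alpha^1_C$ and the twisting line bundle $M''$, one gets a concrete description of $\beta^2_C(\{p,q\})$ as a line bundle on $C$. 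The key point is that the restriction of $\beta^2_C$ to the locus $S^2(C_i) \subset S^2(C)$ is, up to a fixed twist, the ordinary symmetric square Abel map $S^2(C_i) \to J^2_{C_i}$ of the smooth curve $C_i$, composed with the embedding $L_i \mapsto (L_i, \text{(fixed bundle on }C_j))$; points with one coordinate on each component, and points involving the node, contribute lower-dimensional pieces.

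Next I would analyze the fibers. Over a point of $J^2_{C_1} \times J^2_{C_2}$ (or $J^2_{C_1}\times J^0_{C_2}$), the fiber of $\beta^2_C$ decomposes into contributions from $S^2(C_1)$, from $S^2(C_2)$, and from the mixed/nodal strata. The fiber of the symmetric Abel map $S^2(C_i) \to J^2_{C_i}$ over a line bundle $L_i$ is the projectivized complete linear system $|L_i|$, which is $\mathbb{P}^{h^0(C_i,L_i)-1}$. For a generic fiber these are empty unless $g_{C_i}$ is small; the fibers that are positive-dimensional and reducible of the prescribed shape (two smooth rational curves meeting at a point) force $h^0$ to jump. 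Concretely, a fiber consisting of two $\mathbb{P}^1$'s glued at a point arises precisely when one can split it as a $\mathbb{P}^1$ coming from a $g^1_2$ on $C_1$ and a $\mathbb{P}^1$ coming from a $g^1_2$ on $C_2$ (or from the degenerate situation where $g_{C_2}=0$ so that $C_2 \cong \mathbb{P}^1$ itself contributes a $\mathbb{P}^1$), with the gluing point being the stratum where one of the two moving points hits the node $n$. The existence of a $g^1_2$ on a smooth curve of genus $\ge 2$ is equivalent to that curve being hyperelliptic, while genus-$0$ and genus-$1$ curves always carry a (base-point-free) pencil of degree $2$. I would then check, case by case on $(g_{C_1}, g_{C_2})$, that: if both $C_i$ are hyperelliptic (allowing genus $\le 1$ as automatically "hyperelliptic" in the degenerate sense relevant here), the hyperelliptic pencils produce exactly such a reducible fiber, and conversely any fiber of that shape forces hyperelliptic pencils on both components; finally, $C$ itself is hyperelliptic as a stable curve iff both $C_i$ are hyperelliptic and the hyperelliptic involutions are compatible at the node — a condition which, for compact type with two components, is automatic once both components are hyperelliptic and the node is a Weierstrass-type point, matching exactly the gluing data of the two $\mathbb{P}^1$'s.

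For the forward direction I would take a hyperelliptic stable $C = C_1\cup C_2$, recall (e.g. from the standard theory of stable hyperelliptic curves) that the admissible double cover restricts to a double cover $C_i \to \mathbb{P}^1$ with the node lying over a branch point, hence each $C_i$ carries a degree-$2$ pencil $g_i$ with $n_i$ a ramification point. I would then exhibit the line bundle $L \in J_C^{\ul{e}_2}$ whose fiber under $\beta^2_C$ is the union of the $\mathbb{P}^1$ parametrizing divisors in $g_1$ (pushed into $S^2(C_1) \subset S^2(C)$) and the analogous $\mathbb{P}^1$ from $g_2$, meeting at the single point corresponding to "$n$ counted once on each side", and verify the two pieces are smooth rational and meet transversally at exactly that point using that each $g_i$ is base-point-free with $n_i$ a simple ramification point. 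For the converse, given a fiber $F$ which is two smooth $\mathbb{P}^1$'s meeting at one point, I would argue that $F$ must decompose along the component strata of $S^2(C)$ — it cannot be irreducible and supported in a single $S^2(C_i)$, since the complete linear system fibers of $S^2(C_i)\to J^2_{C_i}$ are irreducible — so each $\mathbb{P}^1$ is (the closure of) a one-dimensional family inside some stratum; tracing which strata can carry a $\mathbb{P}^1$, one finds each component must come from a base-point-free $g^1_2$ on one of the $C_i$, giving hypereliptic pencils on both, and the intersection-at-one-point condition pins down the compatibility at $n$, whence $[C]$ lies in the closure of the hyperelliptic locus.

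The main obstacle I expect is the bookkeeping of the stratification of $S^2(C)$ and identifying precisely which stratum contributes each $\mathbb{P}^1$ and where they glue — in particular handling the node-involving strata correctly and checking transversality of the intersection — as well as treating uniformly the low-genus degenerate cases ($g_{C_2} = 0$, where $C_2 \cong \mathbb{P}^1$ and the whole component $S^2(C_2)$ may collapse, and $g_{C_2}=1$), since there "hyperelliptic" for the component is vacuous and the pencil is not the canonical one. The essential input — that a positive-dimensional complete linear series on a smooth curve forces a $g^1_2$ hence hyperellipticity — is classical, so the real content is the geometry of how the two components' Abel maps assemble inside $\beta^2_C$.
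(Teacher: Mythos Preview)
Your approach is essentially the same as the paper's: compute $\beta^2_C$ explicitly on the strata $S^2(C_1)$, $S^2(C_2)$, and the mixed locus, identify the one-dimensional fiber components with $g^1_2$'s on the $C_i$, and use the standard characterization of hyperelliptic stable two-component curves (both $C_i$ hyperelliptic with $|2n_i|$ the $g^1_2$) to match the ``two $\mathbb{P}^1$'s meeting at a point'' fiber with the point $(n,n)\in S^2(C)$. One small correction: since $C$ is stable of compact type with two components, each $C_i$ has genus $\ge 1$, so the $g_{C_2}=0$ case you worry about never occurs; also note that the two $\mathbb{P}^1$'s lying in the \emph{same} fiber already forces $|2n_i|$ to be the $g^1_2$ on each side---the intersection at $(n,n)$ then comes for free.
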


\begin{proof}
Let $C_1$ and $C_2$ be the components of $C$. Let $n$ be the node of $C$, and set $n_1:=C_1\cap C_2\in C_1$ and $n_2:=C_2\cap C_2\in C_2$. We may assume without loss of generality that $C_1$ is the principal component of $C$.  It is well-known that $C$ is hyperelliptic if and only if $C_i$ is hyperelliptic and $|2n_i|$ is the $g^1_2$ of $C_i$, for $i=1,2$.  By construction, for every $p_i\in C_i$, and $i,j\in\{1,2\}$:
$$\beta^1_C(p_i)\otimes \O_{C_j}\simeq
\begin{cases}
\begin{array}{ll}
\mathcal O_{C_j}(p_i-(i-1)n_j) & i=j
\\
\O_{C_j}((i-1)n_j) & i\ne j
\end{array}
\end{cases}
$$
Assume that $\ul{e}_2=(1,1)$. The case $\ul{e}_2=(2,0)$ is completely analogous. By the definition of the symmetric $2$-nd Abel map we have for every $i,j,k\in\{1,2\}$ and $p_i\in C_i$, $q_j\in C_j$, :
$$
\beta^2_C(p_i,q_j)|_{C_k}\simeq
\begin{cases}
\begin{array}{ll}
\mathcal O_{C_k}(p_k+q_k-n_k) & \text{ if } i=j=k
\\
\mathcal O_{C_k}(n_k) & \text{ if } i=j\ne k
\\
\mathcal O_{C_k}(p_k) & \text{ if } k=i\ne j
\\
\mathcal O_{C_k}(q_k) & \text{ if } i\ne j=k
\end{array}
\end{cases}
$$

Thus $\beta^2_C(p,q)=\beta^2_C(p',q')$ for $(p,q)\ne (p',q')$ if and only if at least one of the following cases holds:
\begin{itemize}
\item[(a)]
$p,q,p',q'\in C_1$ with $|p+q|=|p'+q'|$; in particular, $C_1$ is hyperelliptic.
\item[(b)]
$p,q,p',q'\in C_2$ with $|p+q|=|p'+q'|$; in particular, $C_2$ is hyperelliptic.
\item[(c)]
$p,q\in C_1$ and $p',q'\in C_2$ with $|p+q|=|2n_1|$ and $|p'+q'|=|2n_2|$; in particular, $C_1,C_2$ are hyperelliptic and $|2n_1|$, $|2n_2|$ are the $g^1_2$'s.
\end{itemize}

Denote by $F_{p,q}:=(\beta^2_C)^{-1}(\beta^2_C(p,q)).$ 
Notice that the dimension of $F_{p,q}$ is at most 1. If the dimension of $F_{p,q}$ is 1, then (a) and (b) imply that $F_{p,q}$ has at most two components $E_1$ and $E_2$, given by:
\begin{equation}\label{E1}
E_i=\{(p,q)\in S^2(C_i) : |p+q| \tx{ is the } g^1_2 \text{ of } C_i\}\simeq \mathbb{P}^1, \text{ for } i=1,2.
\end{equation}

Assume that $C$ is not hyperelliptic, and that $F_{p,q}$ is a curve with two components, as in (\ref{E1}). 
Notice that (c) does not hold, because $C$ is not hyperelliptic. Then $E_1\subseteq S^2(C_1)-(n,n)$ and $E_2\subseteq S^2(C_2)-(n,n)$, and hence $E_1\cap E_2=\emptyset$.

Conversely, if $C$ is hyperelliptic, then (a), (b) and (c) hold. In particular, $F_{n,n}=E_1\cup E_2$, where $E_1, E_2$ are as in (\ref{E1}) and  
 $E_1\cap E_2=(n,n)$.
\end{proof}

\subsection{Abel maps with other targets}\label{special}

Let $f\:\C\to B$ be a family of stable curves.
Fix an integer $d\ge 1$. We denote by  
$J^{d, ss}_f$ the relative version of  $J^{d,ss}_C$, i.e. the $B$-scheme whose fiber over $b$ is $J^{d,ss}_{C_b}$.

 Let $P^d_f$ be the relative generalized Jacobian of the family $f$. A geometrically meaningful compactification $\ol{P^d_f}\ra B$ of $P^d_f$ is constructed in \cite{C}. It is the same compactification of $P^d_f$ produced in \cite{S} and \cite{Pand}.  The fiber $\ol{P^d_{C_b}}$ over $b\in B$ parametrizes the equivalence classes (under a suitable equivalence relation) of pairs $(X, L)$, where $X$ is a nodal curve obtained by blowing up $C_b$ and $L$ is a degree-$d$ line bundle on $X$, whose multidegree satisfies the numerical condition (\ref{BI1}). In particular, we get a set-theoretic map:
$$
\psi_d\:J^{d,ss}_f\lra \ol{P^d_f}
$$
which is indeed a morphism.  A fiber of $\psi_d$ parametrizes the set of line bundles contained in a Jordan-H\"older equivalence class of rank-1 torsion free semistable sheaves. We will refer to \cite[Section 8]{E01} for more details.

If $C$ is of compact type and $\alpha^d_C$ is its $d$-th Abel map, we obtain an other Abel map $\psi_d\circ\alpha^d_C$ whose target is $\ol{P^d_C}$. 
Some natural questions arise:
\begin{itemize}
\item[(Q1)]
do $\alpha^d_C$ and $\psi_d\circ\alpha^d_C$ have the same set-theoretic fibers?
\item[(Q2)]
if $[C]\in \Delta_{g/2}$, and $\alpha^d_1$ and $\alpha^d_2$ are the two Abel maps of $C$ whose target is $J^{d,ss}_C$, do $\psi_d\circ\alpha^d_1$ and $\psi_d\circ\alpha^d_2$ have the same set-theoretic fibers?
\end{itemize}

We will answer positively to the posed questions.

\begin{Prop}\label{same-fiber}
Fix an integer $g\ge 2$. Let $C$ be a curve of compact type of genus $g$. For every integer $d\ge 1$, consider the $d$-th Abel map  $\alpha^d_C\: C^d\ra J^{d,ss}_C$ of $C$ and  the morphism $\psi_d\:J^{d,ss}_C\ra\ol{P^d_C}$. Then $\alpha^d_C$ and $\psi_d\circ\alpha^d_C$ have the same set-theoretic fibers.
\end{Prop}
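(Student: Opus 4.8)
The plan is to show that the map $\psi_d$ is injective on the image of $\alpha^d_C$; then the fibers of $\psi_d\circ\alpha^d_C$ coincide with those of $\alpha^d_C$, since $\psi_d\circ\alpha^d_C$ is obtained from $\alpha^d_C$ by post-composition with a map that separates the points of the image. By Theorem \ref{abelcompact}(i), the image of $\alpha^d_C$ is contained in a single multidegree stratum $J^{\ul{e}_d}_C$ of $J^{d,ss}_C$, where $\ul{e}_d$ is $X^{pr}$-quasistable. So it suffices to prove that $\psi_d$ restricted to $J^{\ul{e}_d}_C$ is injective.

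First I would recall that, by the description of the fibers of $\psi_d$ recalled in the text (and \cite[Section 8]{E01}), a fiber of $\psi_d$ through a point of $J^{d,ss}_C$ is the set of line bundles lying in one Jordan--H\"older equivalence class of semistable rank-$1$ torsion-free sheaves. For a curve $C$ of compact type, the key point is that \emph{distinct line bundles of the same $X$-quasistable multidegree are never Jordan--H\"older equivalent}: the only way two semistable line bundles can be JH-equivalent is via a chain of sheaves supported on subcurves meeting along separating nodes, and the strict inequality (\ref{BI2}) defining $X^{pr}$-quasistability rules out any nontrivial destabilizing subcurve containing $X^{pr}$, while semistability together with the equality case in (\ref{BI1}) on the complementary tails forces any would-be JH-graded pieces to recombine into the \emph{same} line bundle. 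Concretely: if $L, L'\in J^{\ul{e}_d}_C$ are JH-equivalent, then they have the same associated graded; but the graded object of a line bundle of quasistable multidegree on a curve of compact type determines the line bundle, because gluing data at separating nodes is unobstructed (there is no $\mathbb G_m$-worth of gluings contributing to a positive-dimensional JH-class — all the relevant $\mathrm{Ext}$'s between the polystable pieces that could produce a nonsplit semistable bundle force a strict destabilizing subcurve containing $X^{pr}$, contradicting quasistability). Hence $L\simeq L'$.

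The main obstacle I expect is making precise the claim that, within a \emph{fixed} quasistable multidegree stratum on a compact-type curve, the JH-class is a single point. One must check that any subcurve $Y$ realizing an equality in (\ref{BI1}) (a ``strictly semistable'' wall) necessarily has $X^{pr}\subseteq Y$ or $X^{pr}\subseteq Y'$; in the first case (\ref{BI2}) is violated by $\ul{e}_d$, and in the second case the complementary subcurve $Y'$ is a tail not containing $X^{pr}$, and one shows using the iterative structure of $\ul{e}_d$ (Definition \ref{Main-Def}) and Lemma \ref{twist} that $\ul{e}_d$ is in fact \emph{strictly} semistable-free on such $Y$ as well, i.e. $\ul{e}_d$ satisfies a strict inequality there — so no wall passes through the stratum at all, and each JH-class meets $J^{\ul{e}_d}_C$ in at most one point. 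Granting this, $\psi_d|_{J^{\ul{e}_d}_C}$ is injective and the proposition follows: $(\psi_d\circ\alpha^d_C)^{-1}(\psi_d\circ\alpha^d_C(x)) = (\alpha^d_C)^{-1}\bigl(\psi_d^{-1}(\psi_d(\alpha^d_C(x)))\cap J^{\ul{e}_d}_C\bigr) = (\alpha^d_C)^{-1}(\alpha^d_C(x))$ for every $x\in C^d$.
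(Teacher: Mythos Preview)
Your overall strategy is sound and matches the paper's: reduce to showing that $\psi_d$ is injective on the stratum $J^{\ul{e}_d}_C$ (the paper in fact proves injectivity on $J^{\ul{d}}_C$ for \emph{every} semistable multidegree $\ul{d}$). The problem is the mechanism you propose in your third paragraph.

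Your claimed route---show that $\ul{e}_d$ lies on no wall of~(\ref{BI1}), so the JH-class is automatically a singleton---is false. Take $[C]\in\Delta_{g/2}$ with $C=C_1\cup C_2$, $g_{C_1}=g_{C_2}=g/2$, $X^{pr}=C_1$, and $d=1$. Then $\ul{e}_1=(1,0)$, and for $Y=C_2$ one computes
\[
\left|\,d_Y-\frac{d}{2g-2}\deg\omega_C|_Y\,\right|=\left|0-\tfrac12\right|=\tfrac12=\frac{k_Y}{2},
\]
so equality holds in~(\ref{BI1}); $\ul{e}_1$ sits exactly on a wall, yet is $X^{pr}$-quasistable. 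Your case split (``in the first case (\ref{BI2}) is violated'') is also off: equality in~(\ref{BI1}) can occur at $+k_Y/2$, not only at $-k_Y/2$, and then (\ref{BI2}) for $Y\supseteq X^{pr}$ is perfectly satisfied. So the ``strictly-semistable-free'' claim cannot be established and this line does not close.

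The paper sidesteps JH-filtrations entirely. If $\psi_d([L])=\psi_d([M])$ with $L,M$ of the same multidegree, then by \cite[Theorem~5.1.6]{CCC} their pullbacks to a blowup $W\to C$ differ by a twist $\mathcal O_{\mathcal W}(\sum a_{Y_i}Y_i)|_W$. Equality of multidegrees forces all $a_{Y_i}=0$ (on a compact-type curve, twisting by a nonzero combination of components changes the multidegree), hence $L|_X\simeq M|_X$ for every component $X$, and then the isomorphism~(\ref{iso-compo}) gives $L\simeq M$. This is a two-line argument once you invoke the right external input, and it works for any fixed multidegree, quasistable or not. Your second paragraph gestures at the right endgame (``the graded object \dots\ determines the line bundle'' via~(\ref{iso-compo})), and that idea \emph{can} be made rigorous even in the presence of walls---but you would have to actually unpack the JH-graded on a compact-type curve and argue that the pieces recover the restrictions to components; the ``no walls'' shortcut you propose instead does not work.
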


\begin{proof}
We will show that $\psi_d$ is injective over $J^{\ul{d}}_C$, for every semistable multidegree $\ul{d}$ of total degree $d$. Suppose that $L$ and $M$ are line bundles of $C$ with multidegree $\ul{d}$ and such that $\psi_d([L])=\psi_d([M])$. Then, 
from \cite[Theorem 5.1.6]{CCC} there exists a curve $W$ obtained by blowing up $C$ and a smoothing $\W\ra B$ of $W$ such that, if we denote by $\pi\:W\ra C$ the morphism of blow up and by $Y_1,\dots Y_\gamma$ the irreducible components of $W$, then: 
$$\pi^*L\otimes\pi^*M^{-1}\simeq\mathcal O_{\W}(\sum_{i=1}^\gamma a_{Y_i}\cdot Y_i)|_W \, , \, \tx{ for some } (a_{Y_1},\dots a_{Y_\gamma})\in \ze^\gamma.$$ Since $M$ and $L$ have the same multidegree, it follows that $a_{Y_i}=0$ for each $i=1,\dots,\gamma$. In particular $L|_X\simeq M|_X$ for each component $X$ of $C$. Since $C$ is of compact type, from (\ref{iso-compo}) we have that $L\simeq M$ and we are done.
\end{proof}

\begin{Rem}\label{higher-maps}
It follows from \cite{CE} and \cite{CCE} that one can answer positively to the analogous of question (Q1) for the first Abel map of a stable curve.
 Nevertheless, we believe that this phenomenon does not take place for higher Abel maps of stable curves not of compact type, as the following example hints.

Fix a integer $g\ge 2$.  Let $C$ be a stable curve of genus $g$ with components $X_1,\dots,X_\gamma$. Fix an integer $d\ge 2$. Let $\dot{C}^d\subset C^d$ be the subset of  the $d$-tuples of smooth points of $C$. For every semistable multidegree $\ul{d}$ of total degree $d$, consider the following subset of 
$C^d$: 
$$\dot{C}^{\ul{d}}:=\{(p_1,\dots,p_d)\in \dot{C}^d : \#(X_i\cap \{p_1,\dots,p_d\})=d_{X_i}, \tx {for } i=1,\dots,\gamma\}.$$ 

There exists  a natural Abel map defined on $\dot{C}^{\ul{d}}$. In fact, consider  the line bundle $\I:=\mathcal O_{\dot{C}^{\ul{d}}\times C}(\sum_{r=1}^d\Delta_r)$ on  $\dot{C}^{\ul{d}}\times C$, where:
$$\Delta_r:=\{(p_1,\dots,p_d,p)\in \dot{C}^{\ul{d}}\times C : p_r=p\}, \tx{ for } r=1,\dots, d.$$
Pick the trivial family of curves
$\pi\:\dot{C}^{\ul{d}}\times C\ra \dot{C}^{\ul{d}}$, where $\pi$ is the projection.  Then $\I$ yields a family of semistable line bundles of $C$ 
over the base  
$\dot{C}^{\ul{d}}$. Since $J^{d,ss}_C$ is a fine moduli scheme, 
 we get a morphism $\alpha^{\ul{d}}_C \: \dot{C}^{\ul{d}}\ra J^{d,ss}_C$ such that: 
 $$\alpha^{\ul{d}}_C(p_1,\dots,p_d)=\mathcal O_{C}(\sum_{i=1}^d p_i).$$

Suppose now that $C$ is a binary curve of genus $g\ge 2$, i.e. a stable curve with two smooth rational components intersecting at $g+1$ nodes. Let $X_1$ and $X_2$ be the components of $C$ and let $q_1,\dots,q_{g+1}$ be the nodes of $C$. 
Consider the following subset of $J^{g+1}_C$:
$$
\mathcal F:=\{L\in J^{g+1}_C : L|_{X_1}\simeq\mathcal O_{X_1}(\sum_{i=1}^{g+1}q_i)
\, ; \, L|_{X_2}\simeq\mathcal O_{X_2}\}/\ \tx{iso}
$$ 
As a set, $\mathcal F=(k^*)^g$. It is easy to check that if $L\in\mathcal F$, then $L$ is a semistable line bundle. For each $L\in\mathcal F$, we have $h^0(C,L)=2+h^0(C,\omega_C\otimes L^{-1})$, 
by Riemann-Roch, and:
$$H^0(C,\omega_C\otimes L^{-1})\subseteq H^0(C,\omega_C|_{X_1}\otimes L^{-1}|_{X_1})\oplus H^0(C,\omega_C|_{X_2}\otimes L^{-1}|_{X_2})\simeq $$
$$\simeq H^0(\mathbb{P}^1,\mathcal O_{\mathbb{P}^1}(-3-g))\oplus H^0(\mathbb{P}^1, \mathcal O_{\mathbb{P}^1}(-2))=0.$$
Hence $h^0(C, L)=2$. Since $L|_{X_2}\simeq\mathcal O_{X_2}$, we have that 
if $s(q_i)=0$ for some $s\in H^0(C, L)$ and $i=1,\dots,g+1$, then $s|_{X_2}=0$. 
Hence, for every $i=1,\dots, g+1$: 
$$H^0(C, L(-q_i))\simeq H^0(C, L(-\sum_{i=1}^{g+1}q_i))\simeq H^0(X_1, L|_{X_1}(-\sum_{i=1}^{g+1}q_i))\simeq k.$$
Hence we have $H^0(C, L)\simeq s_L\cdot k\oplus H^0(C, L(-\sum_{i=1}^{g+1}q_i)$, where $s_L$ is a section such that $s_L(q_i)\ne 0$, for each $i=1,\dots,g+1$. 
Consider the set: 
$$\hat{\mathcal F}:=\{(p_1,\dots,p_{g+1}) : \tx{div}(s_L)=p_1+\dots p_{g+1}\, , \, \tx{ for some } L\in\mathcal F\}\subseteq \dot{C}^{(g+1, 0)}.$$
In particular, we have a bijection: 
\begin{equation}\label{bij}
\alpha^{(g+1,0)}_C|_{\hat{\mathcal F}}\:\hat{\mathcal F}\stackrel{1:1}{\lra} \mathcal F.
\end{equation}

Consider now the morphism $\psi_{g+1}\:J^{g+1,ss}_C\ra \ol{P^{g+1}_C}$. We claim that $\psi_{g+1}$ contracts 
$\mathcal F$ to a point. Indeed, recall the definition (\ref{polar}) of 
 canonical polarization $E_{g+1}$. Then:
\begin{itemize}
\item[(a)]
if we denote $\ul{d}_L:=(g+1,0)$, the multidegree of $L$,
 for each $L\in\mathcal F$, then: $$\chi(L|_{X_2}\otimes E_{g+1}|_{X_2})=(d_L)_{X_2}-\frac{g+1}{2g-2}\cdot\deg(\omega_C|_{X_2})+\frac{k_{X_2}}{2}=0$$
and in particular $L$ is not $X_2$-quasistable;
\item[(b)]
by construction, we have $L|_{X_2}\simeq L'|_{X_2}\simeq\mathcal O_{X_2}$ and $\ker(L\ra L|_{X_2})\simeq \ker(L'\ra L'|_{X_2})\simeq\mathcal O_{X_1}$, for each $L,L'\in\mathcal F$.
\end{itemize}
Following \cite[Section 1.3]{E01}, the properties (a) and (b) imply that $L$ and $L'$ are Jordan-H\"older equivalent, for each $L,L'\in\mathcal F$. In particular, by \cite[Section 8]{E01}, the morphism $\psi_{g+1}$ contracts $\mathcal F$ to a point. Hence, recalling the bijection (\ref{bij}), we get a sequence of morphisms:
$$\psi_{g+1}\circ\alpha^{(g+1,0)}_C|_{\hat{\mathcal F}}\:
\hat{\mathcal F}\stackrel{1:1}{\lra}\mathcal F\stackrel{\psi_{g+1}}{\lra}\{pt\},$$
showing that the fibers of $\alpha^{(g+1,0)}_C$ and $\psi_{g+1}\circ\alpha^{(g+1,0)}_C$ are different.
\end{Rem}

\begin{Prop}
Fix an even integer $g\ge 2$. Let $C$ be a curve of compact type of genus $g$ such that $[C]\in \Delta_{g/2}$. Let $X_1$ and $X_2$ be the semicentral components of $C$. 
For each integer $d\ge 1$, let $\alpha^d_1$ (respectively $\alpha^d_2$) be the Abel map of $C$, once we choose $X_1$ (respectively $X_2$) to be the principal component of $C$. 
If $Y$ is the tail of $X'_1$ such that $g_Y=g/2$, then there exists an integer $\eta_d\in\{-1, 0, 1\}$ such that: 
\begin{equation}\label{twist-relation}
\alpha^d_1(p)\simeq \alpha^d_2(p)\otimes \mathcal O_C(\eta_d \cdot Y) \tx{ for each } p\in C^d.
\end{equation}
In particular, if we consider the morphism $\psi_d\:J^{d,ss}_C\ra\ol{P^d_C}$, then $\psi_d\circ\alpha^d_1$ and $\psi_d\circ\alpha^d_2$ have the same set-theoretic fibers.
\end{Prop}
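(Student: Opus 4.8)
The plan is to prove, by induction on $d$, the sharper statement that $\eta_d=1$ when $d$ is odd and $\eta_d=0$ when $d$ is even, and that accordingly the two multidegrees $\ul{e}_d$ (one for each choice of principal component) differ by $\eta_d$ on $X_1$, by $-\eta_d$ on $X_2$, and agree on every other component of $C$. First I would record the geometric facts behind the argument: since $[C]\in\Delta_{g/2}$, the node $n:=X_1\cap X_2$ cuts $C$ into the two tails $Y'\ni X_1$ and $Y\ni X_2$, both of genus $g/2$; one sees, as in the fifth step of the proof of Lemma~\ref{centracomp}, that $Y$ and $Y'$ are the \emph{only} tails of $C$ of genus $g/2$; and, directly from the structure of tails at a separating node, every tail of $C$ contains both $X_1$ and $X_2$ or else neither of them, the exceptions being $Y'$ (which contains $X_1$ but not $X_2$) and $Y$ (which contains $X_2$ but not $X_1$). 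Hence, passing from the choice $X^{pr}=X_1$ to the choice $X^{pr}=X_2$, the set of small tails changes only by replacing $Y$ with $Y'$, and for every $\ul{d}$-big tail other than $Y,Y'$ membership in $\T_{\ul{d}}(X_1)$ is equivalent to membership in $\T_{\ul{d}}(X_2)$. Finally, by (\ref{iso-compo}) the line bundle $\O_C(Y)$ has degree $1$ on $X_1$, degree $-1$ on $X_2$ and degree $0$ on every other component, and $\O_C(Y)\ot\O_C(Y')\simeq\O_C$, so $\O_C(-Y')\simeq\O_C(Y)$.

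For the base case $d=1$ I would compute directly from (\ref{first-def}). For a fixed $q\in C$, the line bundles $\alpha^1_1(q)$ and $\alpha^1_2(q)$ differ only through two effects: the sum over the small tails containing $q$ changes by $+Y$ if $q\in Y$ and by $-Y'$ if $q\in Y'$ (the common small tails cancel), and, when $q$ is the node $n$, the small tail attached to $n$ changes from $Y$ to $Y'$, which by (\ref{iso-compo}) replaces $\N_n$ by $\N_n\ot\O_C(-Y)$. Assembling these and using $\O_C(-Y')\simeq\O_C(Y)$ gives $\alpha^1_1(q)\simeq\alpha^1_2(q)\ot\O_C(Y)$ for every $q\in C$, whether smooth or nodal; this is the assertion for $d=1$ with $\eta_1=1$, and comparing multidegrees gives the multidegree statement.

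For the inductive step I would use that, by the construction of the Abel maps, $\alpha^{d+1}_i(p,q)=\alpha^d_i(p)\ot\alpha^1_i(q)\ot\O_C(-\sum_{Z\in\T_{\ul{e}_d^{(i)}}(X_i)}Z)$ for $i=1,2$. By the inductive hypothesis the two multidegrees $\ul{e}_d^{(i)}$ differ only on $X_1,X_2$, so for each tail $Z$ containing neither of them one has $(e_d^{(1)})_Z=(e_d^{(2)})_Z$, whence $Z\in\T_{\ul{e}_d^{(1)}}(X_1)$ if and only if $Z\in\T_{\ul{e}_d^{(2)}}(X_2)$; these tails cancel, and what survives is $Y$ (present for $i=1$ exactly when $Y$ is $\ul{e}_d^{(1)}$-big, say when $\varepsilon=1$) and $Y'$ (present for $i=2$ exactly when $Y'$ is $\ul{e}_d^{(2)}$-big, say when $\varepsilon'=1$). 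Combining this with the base case, the inductive hypothesis and $\O_C(-Y')\simeq\O_C(Y)$ yields $\alpha^{d+1}_1(p,q)\simeq\alpha^{d+1}_2(p,q)\ot\O_C((\eta_d+1-\varepsilon-\varepsilon')Y)$, so $\eta_{d+1}=\eta_d+1-\varepsilon-\varepsilon'$ (and the multidegree statement again follows by comparing multidegrees). The main obstacle is to show $\eta_{d+1}\in\{-1,0,1\}$, that is, to evaluate $\varepsilon$ and $\varepsilon'$; here the hypothesis $g_Y=g/2$ is decisive. It gives $\deg\omega_C|_Y=\deg\omega_C|_{Y'}=g-1$ and $2g_Y-g=0$, so a genus-$g/2$ tail $T$ is $\ul{d}$-big (for total degree $d$) precisely when $d_T<d/2$; and since $\ul{e}_d^{(1)}$ is $X_1$-quasistable and $\ul{e}_d^{(2)}$ is $X_2$-quasistable by Theorem~\ref{abelcompact}(i), applying the inequalities (\ref{BI1}) and (\ref{BI2}) to $Y'\supseteq X_1$ and to $Y\supseteq X_2$ leaves no slack and forces $(e_d^{(1)})_{Y'}=(e_d^{(2)})_Y=\lceil d/2\rceil$, hence $(e_d^{(1)})_Y=(e_d^{(2)})_{Y'}=\lfloor d/2\rfloor$. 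Therefore $\varepsilon=\varepsilon'$, equal to $1$ if $d$ is odd and to $0$ if $d$ is even, so $\eta_{d+1}=\eta_d+1-2\varepsilon$ toggles with the parity of $d$ and remains in $\{0,1\}\subseteq\{-1,0,1\}$, closing the induction.

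For the last assertion I would argue as follows: $\O_C(\eta_d Y)$ has total degree $0$, and both $\alpha^d_2(p)$ and $\alpha^d_2(p)\ot\O_C(\eta_d Y)\simeq\alpha^d_1(p)$ are semistable by Theorem~\ref{abelcompact}(i). Since tensoring by a fixed line bundle carries the Jordan-H\"older factors of a semistable sheaf to those of its twist, two semistable line bundles of multidegree $\ul{e}_d^{(2)}$ on $C$ are Jordan-H\"older equivalent if and only if their $\O_C(\eta_d Y)$-twists are. Applying this to $\alpha^d_2(p)$ and $\alpha^d_2(p')$ gives $\psi_d(\alpha^d_1(p))=\psi_d(\alpha^d_1(p'))$ if and only if $\psi_d(\alpha^d_2(p))=\psi_d(\alpha^d_2(p'))$ for all $p,p'\in C^d$, that is, $\psi_d\circ\alpha^d_1$ and $\psi_d\circ\alpha^d_2$ have the same set-theoretic fibers.
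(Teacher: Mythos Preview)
Your argument is correct and follows the same inductive skeleton as the paper (recursive definition of $\alpha^{d+1}_i$, cancellation of the common $\ul{d}$-big tails away from $Y,Y'$, and the resulting recursion $\eta_{d+1}=\eta_d+1-\varepsilon-\varepsilon'$), but two of the substeps are carried out differently and are worth noting.

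For the bound $\eta_{d+1}\in\{-1,0,1\}$, the paper does not compute $\varepsilon,\varepsilon'$: it simply observes that both $\ul{e}_{1,d+1}$ and $\ul{e}_{2,d+1}$ are semistable, so their values on the tail $Y$ both lie in the length-$1$ interval $[\tfrac{d}{2}-\tfrac12,\tfrac{d}{2}+\tfrac12]$, which forces $|\eta_{d+1}|\le 1$. Your route is sharper: using $X_i$-quasistability (not just semistability) at the genus-$g/2$ tails pins down $(e^{(1)}_d)_Y=(e^{(2)}_d)_{Y'}=\lfloor d/2\rfloor$ exactly, hence $\varepsilon=\varepsilon'=[d\ \text{odd}]$ and $\eta_d\in\{0,1\}$ alternates with parity. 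This is more informative than the paper's conclusion.

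For the last statement, the paper actually proves the stronger fact $\psi_d\circ\alpha^d_1=\psi_d\circ\alpha^d_2$ (equality of maps, not merely of fibers): it chooses a smoothing, extends $\alpha^d_1(p)$ and $\alpha^d_2(p)$ to sheaves on $\C$ differing by $\O_\C(\eta_d Y)$, and invokes separatedness of $\ol{P^d_f}$ to conclude that the two induced sections $B\to\ol{P^d_f}$ agree. Your Jordan--H\"older argument gives only the stated conclusion about fibers; it is valid, but note that in the compact-type setting you could bypass it entirely: by Proposition~\ref{same-fiber}, $\psi_d$ is injective on each $J^{\ul d}_C$, so the fibers of $\psi_d\circ\alpha^d_i$ coincide with those of $\alpha^d_i$, and these in turn coincide for $i=1,2$ because the two maps differ by a fixed twist.
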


\begin{proof}
Recall that $X_1\cap X_2\ne\emptyset$, from Lemma \ref{centracomp} (iii).
Let $\{\ul{e}_{1,1},\dots,\ul{e}_{1,d},\dots\}$ (resp. $\{\ul{e}_{2,1},\dots,\ul{e}_{2,d},\dots\}$) be the set of multidegrees induced by 
the Abel maps $\alpha^1_1,\dots,\alpha^d_1\dots$ (resp. $\alpha^1_2,\dots,\alpha^d_2\dots$).
We show (\ref{twist-relation}) by induction on $d$. Indeed, it is true if $d=1$ with $\eta_1=1$, as explained in \cite{CCE}. Fix an integer $d\ge 2$. For each tail $Z$ of $C$, and for $i=1,2$, set:
$$\epsilon_{i, d, Z}:=
\begin{cases}
\begin{array}{ll}
0 & \tx{ if } Z\notin \T_{\ul{e}_{i, d}}(X_i)\\
1 & \tx{ if } Z\in \T_{\ul{e}_{i,d}}(X_i)
\end{array}
\end{cases}$$
Let $Y_1$ be the tail of $X'_2$ such that $g_{Y_1}=g/2$ and set $Y_2:=Y$. 
For each $p=(p_1,\dots,p_d,p_{d+1})\in C^{d+1}$, set $\hat{p}:=(p_1,\dots,p_d)\in C^d$. 
We have:
$$\alpha^1_1(p_{d+1}) \simeq\alpha^1_2(p_{d+1})\otimes\mathcal O_C(Y_2)$$
and by induction:
$$\alpha^d_1(\hat{p})\simeq\alpha^d_2(\hat{p})\otimes\mathcal O_C(\eta_d\cdot Y_2).$$
In particular $(e_{1,d})_Y=(e_{2,d})_Y$ for each subcurve $Y$ such that 
$Y\subseteq (X_1\cup X_2)'$. Then we have: 
$$\alpha^{d+1}_1(p)\simeq\alpha^d_1(\hat{p})\otimes\alpha^1_1(p_{d+1})\otimes\mathcal O_{C}(-\sum_{Z\in\T_{\ul{e}_{1, d}}(X_1)}Z)\simeq$$
$$\simeq\alpha^d_1(\hat{p})\otimes\alpha^1_1(p_{d+1})\otimes\mathcal O_C(-\epsilon_{1, d, Y_2}\cdot Y_2-\sum_{Z\in\T_{\ul{e}_{1, d}}(X_1\cup X_2)}Z)\simeq$$
$$\simeq \alpha^d_2(\hat{p})\otimes \alpha^1_2(p_{d+1})\otimes\mathcal O_C((\eta_d+1-\epsilon_{1, d, Y_2})\cdot Y_2-\sum_{Z\in\T_{\ul{e}_{1, d}}(X_1\cup X_2)}Z)\simeq$$
$$\simeq\alpha^{d+1}_2(p)\otimes\mathcal O_C(\epsilon_{2, d, Y_1}\cdot Y_1+(\eta_d+1-\epsilon_{1, d, Y_2})\cdot Y_2)\simeq$$
$$\simeq\alpha^{d+1}_2(p)\otimes\mathcal O_C(\eta_{d+1}\cdot Y_2)$$
where 
$$
\eta_{d+1}:=\eta_d+1-\epsilon_{2, d, Y_1}-\epsilon_{1, d, Y_2}.
$$
We show that $|\eta_{d+1}|\le 1$. Indeed, if 
$|\eta_{d+1}|>2$, then $|(e_{1, d+1})_{Y_1}-(e_{2, d+1})_{Y_1}|>2$.  
Being $\ul{e}_{1,d+1}$ semistable at $Y_1$, we have: 
$$-\frac{1}{2}\le (e_{1,d+1})_{Y_1}-\frac{d}{2g-2}\deg\omega_C|_{Y_1}\le\frac{1}{2}$$
and hence $\ul{e}_{2, d+1}$ is not semistable at $Y_1$, which is a contradiction.

To show the last statement, we show that 
$\psi_d(\alpha^d_1(p))=\psi_d(\alpha^d_2(p))$, for each $p\in C^d$. Let $f\:\C\ra B$ be any smoothing of $C$. Pick a coeherent sheaf $\mathcal I$ on $\C$, flat over $B$, such that $\mathcal I|_{C_b}$ is a line bundle on $C_b$ for each $b\in B$ and $\I|_C\simeq \alpha^d_1(p)$. Consider $\I':=\I\otimes\mathcal O_{\C}(\eta_d\cdot Y)$. 
Since $C$ is of compact type, we have 
$\I'|_C\simeq \alpha^d_2(p)$. By \cite[Proposition 8.1]{C}, we get morphisms: 
$$\psi_{\I}\:B\ra \ol{P^d_f} \tx{ \, and \, } \psi_{\I'}\:B\ra \ol{P^d_f} $$ 
such that $\psi_{\I}(b)=\I|_{C_b}$ and $\psi_{\I'}(b)=\I'|_{C_b}$ for each $b\in B$. In particular, $\psi_\I(b)=\psi_{\I'}(b)$ for each $b\ne 0$, and $\psi_\I(0)=\psi_d(\alpha^d_1(p))$ and $\psi_{\I'}(0)= \psi_d(\alpha^d_2(p))$. Since $\ol{P^d_f}$ is a separated scheme, we get  $\psi_d(\alpha^d_1(p))=\psi_d(\alpha^d_2(p))$.
\end{proof}

\section*{Acknowledgments}
It is a pleasure to thank Eduardo Esteves for fundamental suggestions and very useful discussions, and for carefully reading a preliminar version of the paper. We thank also Lucia Caporaso for precious discussions.


\begin{thebibliography}{ll}
\bibitem[1]{AK} A. Altman, S. Kleiman, \emph{Compactifying the Picard scheme}, Adv. Math. {\bf 35} (1980) 50--112. 
\bibitem[2]{BLR} S. Bosch, W. L\"utkebohmert, M. Raynaud, \emph{N\'eron models}. Ergebnisse der Mathematik und ihrer Grenzgebiete, vol. {\bf 21} Springer Verlag (1990).
\bibitem[3]{C} L. Caporaso, \emph{A compactification of the universal $\text{P}$icard variety over the moduli space of stable curves}. J. Amer. Math. Soc. {\bf 7} (1994) 589--660.
\bibitem[4]{CCC} L. Caporaso, C. Casagrande, M. Cornalba, \emph{Moduli of roots of line bundles on curves.} Trans. Amer. Math. Soc. {\bf 359} (2007) 3733--3768.
\bibitem[5]{CE} L. Caporaso, E. Esteves, \emph{On Abel maps of stable curves}. Michigan Math. J. {\bfseries 55} (2007) 575--607.
\bibitem[6]{CCE} L. Caporaso, J. Coelho, E. Esteves, \emph{Abel maps of Gorenstein cuves}. Rendiconti del Circolo Matematico di Palermo vol. {\bfseries  57} (2008) 33--59.
\bibitem[7]{E01} E. Esteves, \emph{Compactifying the relative Jacobian over families of reduced curves}. Trans. Amer. Math. Soc. {\bfseries 353} (2001) 3045--3095.
\bibitem[8]{EH} D. Eisenbud, J. Harris, \emph{Limit linear series: Basic theory}. Invent. Math. {\bf 85} (1986) 337--371.
\bibitem[9]{O} B. Osserman, \emph{A limit linear series moduli scheme}. Annales de l'Institut Fourier {\bf 56} no. 4 (2006) 1165--1205.
\bibitem[10]{P} M. Pacini, \emph{On N\'eron models of moduli spaces of theta characteristics}, Preprint 2008.
\bibitem[11]{Pand} R. Pandharipande, \emph{A compactification over $\ol{M_g}$ of the universal moduli space of slope-semistable vector bundles}, J. Amer. Math. Soc. {\bf 9} (1996) 425--471.
\bibitem[12]{S} C.S. Seshadri, \emph{Vector bundles on curves}, Contemp. Math. {\bf 153} (1993) 163--200.
\end{thebibliography}
\end{document}